\newtheorem{theorem}{Theorem}[section]
\newtheorem{corollary}[theorem]{Corollary}
\theoremstyle{definition}
\theoremstyle{remark}
\newtheorem{remark}[theorem]{Remark}
\numberwithin{equation}{section}
\begin{document}

\setcounter{page}{1}

\title[New estimates for numerical radius in $C^*$-algebras]
{New estimates for numerical radius in $C^*$-algebras}

\author[A.~Zamani]
{Ali Zamani}

\address{Department of Mathematics, Farhangian University, Tehran, Iran
\&
School of Mathematics and Computer Sciences, Damghan University, P.O.BOX 36715-364, Damghan, Iran}
\email{zamani.ali85@yahoo.com}

\subjclass[2010]{46L05, 46L08, 47A12, 47A30, 47A63.}
\keywords{Pre-Hilbert $C^*$-module, $C^*$-algebra, Cauchy--Schwarz inequality, Numerical radius.}
\begin{abstract}
Several numerical radius inequalities in the framework of $C^*$-algebras are proved in this paper.
These results, which are based on an extension of Buzano inequality for elements in a pre-Hilbert $C^*$-module,
generalize earlier numerical radius inequalities.
\end{abstract} \maketitle
\section{Introduction}
Let $\mathscr{A}$ be a $C^*$-algebra with unit $e$
and let $\mathcal{S}(\mathscr{A})$ be the normalized state space of $\mathscr{A}$,
i.e. $\mathcal{S}(\mathscr{A}) = \{\varphi \in \mathscr{A}':\, \varphi \geq 0 \,\,\,\mbox{and}\,\,\,\varphi(e) = 1\}$.
Recall that, for $a\in\mathscr{A}$, the numerical radius is defined by
$v(a) = \sup\left\{|\varphi(a)|: \, \varphi \in \mathcal{S}(\mathscr{A})\right\}$.
It is well-known that $v(\cdot)$ define a norm on $\mathscr{A}$, which is equivalent
to the $C^*$-norm $\|\!\cdot\!\|$.
In fact, for every $a\in \mathscr{A}$,
\begin{align}\label{numerical radius inequality}
\frac{1}{2}\|a\| \leq v(a)\leq \|a\|.
\end{align}
The inequalities in \eqref{numerical radius inequality} are sharp.
The first inequality becomes an equality if $a$ is $2$-nilpotent.
The second inequality becomes an equality if $a$ is normal.
A fundamental inequality for the numerical radius is the power inequality,
which says that for $a\in \mathscr{A}$, $v(a^n) \leq v^n(a)$ for all $n=1, 2, 3, \cdots$.
For a comprehensive account of the numerical radius in $C^*$-algebras, the reader is
referred to \cite{amz, Bo.Du, Bo.Ma, M.Z, Z.Positivity, Z.MIA}.

Pre-Hilbert $C^*$-modules were introduced by Paschke in \cite{Pas} and became a useful tool in various
aspects of noncommutative geometry and topology.
The idea behind this notion is to generalize the notion of a pre-Hilbert space by replacing
the field of the complex numbers, both as scalars and as the range for the inner product,
by a $C^*$-algebra $\mathscr{A}$. For details about pre-Hilbert $C^*$--modules, we refer the reader to \cite{M.T}.

Let $\mathscr{X}$ be a pre-Hilbert $\mathscr{A}$-module with an $\mathscr{A}$-inner product $\langle \cdot, \cdot\rangle$
and let $\varphi$ be a positive linear functional on $\mathscr{A}$.
For $x, y, z \in\mathscr{X}$ with $\varphi(|z|^2) = 1$, very recently in \cite{Z.AFA} it has been shown that
\begingroup\makeatletter\def\f@size{10}\check@mathfonts
\begin{align}\label{G.B}
\Big|\varphi(\langle x, z\rangle)\varphi(\langle y, z\rangle)\Big| \leq \frac{1}{|\alpha|}\Big(\max\{1, |\alpha -1|\}
\sqrt{\varphi(|x|^2)}\sqrt{\varphi(|y|^2)} + \Big|\varphi(\langle x, y\rangle)\Big|\Big),
\end{align}
\endgroup
for any $\alpha \in \mathbb{C}\setminus\{0\}$. Here, the symbol $|x|^2$ stands for $\langle x, x\rangle$.
In particular (taking $\alpha =2$),
\begingroup\makeatletter\def\f@size{10}\check@mathfonts
\begin{align}\label{C.B}
\Big|\varphi(\langle x, z\rangle)\varphi(\langle y, z\rangle)\Big| \leq \frac{1}{2}\Big(
\sqrt{\varphi(|x|^2)}\sqrt{\varphi(|y|^2)} + \Big|\varphi(\langle x, y\rangle)\Big|\Big).
\end{align}
\endgroup
This inequality can be considered as a generalization of the classical Buzano inequality (see \cite{Buz}) in pre-Hilbert $C^*$-modules.
Also, by letting $z=\varphi^{-\frac{1}{2}}(|y|^2)y$ in \eqref{C.B},
we get a useful version of the Cauchy--Schwarz inequality as follows:
\begingroup\makeatletter\def\f@size{10}\check@mathfonts
\begin{align}\label{C.S}
\big|\varphi(\langle x, y\rangle)\big|\leq \sqrt{\varphi(|x|^2)}\sqrt{\varphi(|y|^2)}.
\end{align}
\endgroup
This Cauchy--Schwarz inequality has a lot of elegant applications in the geometry of pre-Hilbert $C^*$-modules,
see, for instance, \cite{A.R.LAMA, W.Z.LAMA, Z.M-IM}. Some other extensions of the Cauchy–-Schwarz inequality in the framework of $C^*$-algebras and pre-Hilbert $C^*$-modules can also be found in \cite{A.B.F.M.AFA, A.B.M, F.F.M.P.S, F.F.M.S, F.F.S.1, F.F.S.2, GG.Dr, I.V, K.D.M.Filomat, Z.AFA, Z.LAMA}
and references therein.

It is our aim in this paper to present new upper bounds for the numerical radius in $C^*$-algebras.
We firstly propose an extension of the Buzano inequality for elements in a pre-Hilbert $C^*$-module.
We then use this inequality to obtain some new estimates for numerical radius in $C^*$-algebras.
Our results extend or improve some theorems in the literature.
\section{Main results}
We begin with an extension of the Buzano inequality in pre-Hilbert $C^*$-modules.
\begin{theorem}\label{T.2.1}
Let $\mathbb{D}$ be a subset of $\mathbb{R}$ and let $f: \mathbb{D}\rightarrow [0, \infty)$
be a mapping such that $f(t)+f(1-t)=1$ for all $t\in\mathbb{D}$.
Let $\mathscr{X}$ be a pre-Hilbert $\mathscr{A}$-module and let $\varphi$ be a positive linear functional on $\mathscr{A}$.
If $x_1, x_2, \ldots, x_n, z \in\mathscr{X}$ such that $\varphi(|z|^2) = 1$,
then for any $\xi\in\mathbb{D}$ and $\alpha \in \mathbb{C}\setminus\{0\}$
\begingroup\makeatletter\def\f@size{10}\check@mathfonts
\begin{align*}
\left|\prod_{i=1}^{n}\varphi(\langle x_i, z\rangle)\right|^2
&\leq \frac{\max\{1, |\alpha -1|^2\}}{|\alpha|^2}\prod_{i=1}^{n}\varphi(|x_i|^2)
+ \frac{f(1-\xi)}{|\alpha|^2}\left|\varphi(\langle x_1, x_2\rangle)\prod_{i=3}^{n}\varphi(\langle x_i, z\rangle)\right|^2
\\& \qquad + \frac{f(\xi) + 2\max\{1, |\alpha -1|\}}{|\alpha|^2}\prod_{i=1}^{n}\sqrt{\varphi(|x_i|^2)}
\left|\varphi(\langle x_1, x_2\rangle)\prod_{i=3}^{n}\varphi(\langle x_i, z\rangle)\right|.
\end{align*}
\endgroup
\end{theorem}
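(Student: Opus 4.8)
The plan is to reduce the general case to the two-variable Buzano-type inequality \eqref{G.B} applied to the triple $x_1,x_2,z$, combined with repeated use of the Cauchy--Schwarz inequality \eqref{C.S} on the remaining factors, and finally to exploit the identity $f(\xi)+f(1-\xi)=1$ to rearrange the leftover term. To keep the notation light I will write $a_i=\varphi(\langle x_i,z\rangle)$, $b_i=\sqrt{\varphi(|x_i|^2)}$, $c=\varphi(\langle x_1,x_2\rangle)$, $M=\max\{1,|\alpha-1|\}$, $w=\prod_{i=3}^{n}a_i$, $W=\prod_{i=3}^{n}b_i$, $B=\prod_{i=1}^{n}b_i=b_1b_2W$ and $u=|c|\,|w|=\bigl|\varphi(\langle x_1,x_2\rangle)\prod_{i=3}^{n}\varphi(\langle x_i,z\rangle)\bigr|$; note that $\max\{1,|\alpha-1|^2\}=M^2$.

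First I would record the elementary consequences of \eqref{C.S}: taking $y=z$ and using $\varphi(|z|^2)=1$ gives $|a_i|\le b_i$ for each $i$, hence $|w|\le W$, while \eqref{C.S} applied to $x_1,x_2$ gives $|c|\le b_1b_2$; together these yield $u\le B$. Next I would invoke \eqref{G.B} with the triple $x_1,x_2,z$ to obtain $|a_1a_2|\le\frac{1}{|\alpha|}\bigl(Mb_1b_2+|c|\bigr)$, square it, and multiply through by $|w|^2$, which gives
\[
\Bigl|\prod_{i=1}^{n}\varphi(\langle x_i,z\rangle)\Bigr|^2=|a_1a_2|^2|w|^2\le\frac{1}{|\alpha|^2}\Bigl(M^2b_1^2b_2^2|w|^2+2Mb_1b_2|c|\,|w|^2+|c|^2|w|^2\Bigr).
\]

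The remaining work is to recognize the three terms. For the first, $b_1^2b_2^2|w|^2\le b_1^2b_2^2W^2=\prod_{i=1}^{n}\varphi(|x_i|^2)$; for the second I would use $|w|\le W$ only once, writing $b_1b_2|c|\,|w|^2\le(b_1b_2W)(|c|\,|w|)=Bu$; the third is simply $|c|^2|w|^2=u^2$. Finally, since $f(\xi)\ge0$ and $u\le B$ I have $f(\xi)u^2\le f(\xi)Bu$, so, using $f(1-\xi)=1-f(\xi)$, $u^2=f(1-\xi)u^2+f(\xi)u^2\le f(1-\xi)u^2+f(\xi)Bu$. Substituting the three estimates back produces precisely $\frac{M^2}{|\alpha|^2}\prod_{i=1}^{n}\varphi(|x_i|^2)+\frac{f(1-\xi)}{|\alpha|^2}u^2+\frac{f(\xi)+2M}{|\alpha|^2}Bu$, which is the asserted bound.

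There is no genuinely hard step here; the argument is essentially a careful unwinding. The one place that requires attention is the middle paragraph: one must \emph{not} bound $|w|^2\le W^2$ uniformly, but instead keep a single factor of $|w|$ alive in the cross term and in the $|c|^2$ term, so that the surviving quantities are exactly $Bu$ and $u^2$ and therefore match the combinatorial shape of the right-hand side; the hypothesis $f(\xi)+f(1-\xi)=1$ is then exactly what converts the surplus $u^2$ into the two terms $f(1-\xi)u^2$ and $f(\xi)Bu$ that appear in the statement.
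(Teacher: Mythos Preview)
Your proof is correct and follows essentially the same route as the paper: apply the two-variable Buzano inequality \eqref{G.B}, use Cauchy--Schwarz \eqref{C.S} to replace the extra factors $|a_i|$ by $b_i$, square, and split the $u^2$ term via $f(\xi)+f(1-\xi)=1$ together with $u\le B$. The only cosmetic difference is that the paper absorbs the tail product into a single vector $y=\bigl(\prod_{i\ge3}\varphi(\langle x_i,z\rangle)\bigr)x_2$ and applies \eqref{G.B} to $x_1,y,z$ (bounding $|w|\le W$ before squaring), whereas you apply \eqref{G.B} to $x_1,x_2,z$, multiply by $|w|^2$, and bound $|w|\le W$ selectively afterwards; by homogeneity of \eqref{G.B} these are equivalent.
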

\begin{proof}
Let $\xi\in\mathbb{D}$ and $\alpha \in \mathbb{C}\setminus\{0\}$. Put $y = \left(\displaystyle{\prod_{i=3}^{n}}\varphi(\langle x_i, z\rangle)\right)x_2$.
By \eqref{G.B} and the Cauchy--Schwarz inequality \eqref{C.S} we have
\begingroup\makeatletter\def\f@size{9}\check@mathfonts
\begin{align*}
\left|\prod_{i=1}^{n}\varphi(\langle x_i, z\rangle)\right|
&= \Big|\varphi(\langle x_1, z\rangle)\varphi(\langle y, z\rangle)\Big|
\\& \leq \frac{1}{|\alpha|}\left(\max\{1, |\alpha -1|\}
\sqrt{\varphi(|x_1|^2)}\sqrt{\varphi(|y|^2)} + \left|\varphi(\langle x_1, y\rangle)\right|\right)
\\& = \frac{1}{|\alpha|}\left(\max\{1, |\alpha -1|\}\sqrt{\varphi(|x_1|^2)}\sqrt{\varphi(|x_2|^2)}
\prod_{i=3}^{n}\left|\varphi(\langle x_i, z\rangle)\right| + \left|\varphi(\langle x_1, x_2\rangle)\prod_{i=3}^{n}\varphi(\langle x_i, z\rangle)\right|\right)
\\& \leq \frac{1}{|\alpha|}\left(\max\{1, |\alpha -1|\}\sqrt{\varphi(|x_1|^2)}\sqrt{\varphi(|x_2|^2)}
\prod_{i=3}^{n}\sqrt{\varphi(|x_i|^2)}\sqrt{\varphi(|z|^2)} + \left|\varphi(\langle x_1, x_2\rangle)\prod_{i=3}^{n}\varphi(\langle x_i, z\rangle)\right|\right)
\\&=\frac{1}{|\alpha|}\left(\max\{1, |\alpha -1|\}
\prod_{i=1}^{n}\sqrt{\varphi(|x_i|^2)} + \left|\varphi(\langle x_1, x_2\rangle)\prod_{i=3}^{n}\varphi(\langle x_i, z\rangle)\right|\right),
\end{align*}
\endgroup
and hence,
\begingroup\makeatletter\def\f@size{9}\check@mathfonts
\begin{align*}
\left|\prod_{i=1}^{n}\varphi(\langle x_i, z\rangle)\right|^2 &\leq \frac{1}{|\alpha|^2}\left(\max\{1, |\alpha -1|\}
\prod_{i=1}^{n}\sqrt{\varphi(|x_i|^2)} + \left|\varphi(\langle x_1, x_2\rangle)\prod_{i=3}^{n}\varphi(\langle x_i, z\rangle)\right|\right)^2
\\& = \frac{\max\{1, |\alpha -1|^2\}}{|\alpha|^2}\prod_{i=1}^{n}\varphi(|x_i|^2)
+ \frac{1}{|\alpha|^2}\left|\varphi(\langle x_1, x_2\rangle)\prod_{i=3}^{n}\varphi(\langle x_i, z\rangle)\right|^2
\\& \qquad + \frac{2\max\{1, |\alpha -1|\}}{|\alpha|^2}\prod_{i=1}^{n}\sqrt{\varphi(|x_i|^2)}
\left|\varphi(\langle x_1, x_2\rangle)\prod_{i=3}^{n}\varphi(\langle x_i, z\rangle)\right|
\\& = \frac{\max\{1, |\alpha -1|^2\}}{|\alpha|^2}\prod_{i=1}^{n}\varphi(|x_i|^2)
\\& \qquad + \frac{f(1-\xi)}{|\alpha|^2}\left|\varphi(\langle x_1, x_2\rangle)\prod_{i=3}^{n}\varphi(\langle x_i, z\rangle)\right|^2 + \frac{f(\xi)}{|\alpha|^2}\left|\varphi(\langle x_1, x_2\rangle)\prod_{i=3}^{n}\varphi(\langle x_i, z\rangle)\right|^2
\\& \qquad \quad + \frac{2\max\{1, |\alpha -1|\}}{|\alpha|^2}\prod_{i=1}^{n}\sqrt{\varphi(|x_i|^2)}
\left|\varphi(\langle x_1, x_2\rangle)\prod_{i=3}^{n}\varphi(\langle x_i, z\rangle)\right|
\\&= \frac{\max\{1, |\alpha -1|^2\}}{|\alpha|^2}\prod_{i=1}^{n}\varphi(|x_i|^2)+ \frac{f(1-\xi)}{|\alpha|^2}\left|\varphi(\langle x_1, x_2\rangle)\prod_{i=3}^{n}\varphi(\langle x_i, z\rangle)\right|^2
\\& \qquad + \frac{f(\xi)}{|\alpha|^2}\big|\varphi(\langle x_1, x_2\rangle)\big|\prod_{i=3}^{n}\big|\varphi(\langle x_i, z\rangle)\big|
\left|\varphi(\langle x_1, x_2\rangle)\prod_{i=3}^{n}\varphi(\langle x_i, z\rangle)\right|
\\& \qquad \quad + \frac{2\max\{1, |\alpha -1|\}}{|\alpha|^2}\prod_{i=1}^{n}\sqrt{\varphi(|x_i|^2)}
\left|\varphi(\langle x_1, x_2\rangle)\prod_{i=3}^{n}\varphi(\langle x_i, z\rangle)\right|
\\&\leq \frac{\max\{1, |\alpha -1|^2\}}{|\alpha|^2}\prod_{i=1}^{n}\varphi(|x_i|^2)+ \frac{f(1-\xi)}{|\alpha|^2}\left|\varphi(\langle x_1, x_2\rangle)\prod_{i=3}^{n}\varphi(\langle x_i, z\rangle)\right|^2
\\& \qquad + \frac{f(\xi)}{|\alpha|^2}\sqrt{\varphi(|x_1|^2)}\sqrt{\varphi(|x_2|^2)}\prod_{i=3}^{n}\sqrt{\varphi(|x_i|^2)}\sqrt{\varphi(|z|^2)}
\left|\varphi(\langle x_1, x_2\rangle)\prod_{i=3}^{n}\varphi(\langle x_i, z\rangle)\right|
\\& \qquad \quad + \frac{2\max\{1, |\alpha -1|\}}{|\alpha|^2}\prod_{i=1}^{n}\sqrt{\varphi(|x_i|^2)}
\left|\varphi(\langle x_1, x_2\rangle)\prod_{i=3}^{n}\varphi(\langle x_i, z\rangle)\right|
\\&= \frac{\max\{1, |\alpha -1|^2\}}{|\alpha|^2}\prod_{i=1}^{n}\varphi(|x_i|^2)
+ \frac{f(1-\xi)}{|\alpha|^2}\left|\varphi(\langle x_1, x_2\rangle)\prod_{i=3}^{n}\varphi(\langle x_i, z\rangle)\right|^2
\\& \qquad + \frac{f(\xi) + 2\max\{1, |\alpha -1|\}}{|\alpha|^2}\prod_{i=1}^{n}\sqrt{\varphi(|x_i|^2)}
\left|\varphi(\langle x_1, x_2\rangle)\prod_{i=3}^{n}\varphi(\langle x_i, z\rangle)\right|.
\end{align*}
\endgroup
\end{proof}
As an immediate consequence of Theorem \ref{T.2.1}, we have the following result.
\begin{corollary}\label{C.2.2}
Let $\mathscr{X}$ be a pre-Hilbert $\mathscr{A}$-module and let $\varphi$ be a positive linear functional on $\mathscr{A}$.
If $x_1, x_2, \ldots, x_n, z \in\mathscr{X}$ such that $\varphi(|z|^2) = 1$, then for any $\alpha \in \mathbb{C}\setminus\{0\}$
\begingroup\makeatletter\def\f@size{10}\check@mathfonts
\begin{align}\label{T.2.1.I.1}
\Big|\prod_{i=1}^{n}\varphi(\langle x_i, z\rangle)\Big| \leq \frac{1}{|\alpha|}\Big(\max\{1, |\alpha -1|\}
\prod_{i=1}^{n}\sqrt{\varphi(|x_i|^2)} + \Big|\varphi(\langle x_1, x_2\rangle)\prod_{i=3}^{n}\varphi(\langle x_i, z\rangle)\Big|\Big).
\end{align}
\endgroup
In particular,
\begingroup\makeatletter\def\f@size{10}\check@mathfonts
\begin{align}\label{T.2.1.I.2}
\Big|\prod_{i=1}^{n}\varphi(\langle x_i, z\rangle)\Big| \leq \frac{1}{2}\Big(
\prod_{i=1}^{n}\sqrt{\varphi(|x_i|^2)} + \Big|\varphi(\langle x_1, x_2\rangle)\prod_{i=3}^{n}\varphi(\langle x_i, z\rangle)\Big|\Big).
\end{align}
\endgroup
\end{corollary}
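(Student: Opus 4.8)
The plan is to derive \eqref{T.2.1.I.1} by specializing Theorem \ref{T.2.1} to a convenient choice of the auxiliary data $(\mathbb{D}, f, \xi)$ and then recognizing that the resulting right-hand side is a perfect square. Concretely, take $\mathbb{D} = \{0, 1\}$ and define $f\colon\mathbb{D}\to[0,\infty)$ by $f(0) = 0$ and $f(1) = 1$; this $f$ satisfies the hypothesis $f(t)+f(1-t) = 1$ for all $t\in\mathbb{D}$, since $f(0)+f(1) = 1$ and $f(1)+f(0) = 1$. Choosing $\xi = 0\in\mathbb{D}$ gives $f(1-\xi) = f(1) = 1$ and $f(\xi) = f(0) = 0$, so Theorem \ref{T.2.1} becomes
\begin{align*}
\left|\prod_{i=1}^{n}\varphi(\langle x_i, z\rangle)\right|^2
&\leq \frac{\max\{1, |\alpha -1|^2\}}{|\alpha|^2}\prod_{i=1}^{n}\varphi(|x_i|^2)
+ \frac{1}{|\alpha|^2}\left|\varphi(\langle x_1, x_2\rangle)\prod_{i=3}^{n}\varphi(\langle x_i, z\rangle)\right|^2 \\
&\qquad + \frac{2\max\{1, |\alpha -1|\}}{|\alpha|^2}\prod_{i=1}^{n}\sqrt{\varphi(|x_i|^2)}
\left|\varphi(\langle x_1, x_2\rangle)\prod_{i=3}^{n}\varphi(\langle x_i, z\rangle)\right|.
\end{align*}

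Next I would invoke the elementary identity $\max\{1, |\alpha-1|^2\} = \big(\max\{1, |\alpha-1|\}\big)^2$ (valid because $t\mapsto t^2$ is increasing on $[0,\infty)$) to rewrite the right-hand side above as the single square
\[
\frac{1}{|\alpha|^2}\left(\max\{1, |\alpha -1|\}\prod_{i=1}^{n}\sqrt{\varphi(|x_i|^2)} + \left|\varphi(\langle x_1, x_2\rangle)\prod_{i=3}^{n}\varphi(\langle x_i, z\rangle)\right|\right)^2,
\]
and then take nonnegative square roots of both sides to obtain exactly \eqref{T.2.1.I.1}. The particular case \eqref{T.2.1.I.2} follows at once by putting $\alpha = 2$, since $\max\{1, |2-1|\} = 1$ and $|2| = 2$.

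There is no real obstacle here; the only points to verify are that the chosen $f$ genuinely satisfies the functional equation on the two-point set $\mathbb{D}$ and that one is entitled to pick $(\mathbb{D}, f, \xi)$ freely, both of which are immediate from the formulation of Theorem \ref{T.2.1}. I note in passing that an even shorter route is available: the first chain of estimates in the proof of Theorem \ref{T.2.1} — the bound on $\left|\prod_{i=1}^{n}\varphi(\langle x_i, z\rangle)\right|$ obtained from \eqref{G.B} and the Cauchy--Schwarz inequality \eqref{C.S} before the squaring step — is literally the assertion \eqref{T.2.1.I.1}, so the corollary requires no new computation at all.
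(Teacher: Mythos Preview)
Your proposal is correct and matches the paper's approach: the paper simply states that Corollary~\ref{C.2.2} is ``an immediate consequence of Theorem~\ref{T.2.1}'' with no further argument, and both of your routes---specializing $(\mathbb{D},f,\xi)$ so that the right-hand side collapses to a perfect square, and reading \eqref{T.2.1.I.1} directly off the first displayed chain in the proof of Theorem~\ref{T.2.1} before the squaring step---are legitimate ways to unpack that sentence. Your closing remark that the un-squared estimate in the proof of Theorem~\ref{T.2.1} \emph{is} \eqref{T.2.1.I.1} is almost certainly what the author intended.
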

\begin{remark}\label{R.2.3}
Since the only state of the $C^*$-algebra $\mathbb{C}$ of all complex numbers is the
identity map, in the case of a pre-Hilbert space $\big(\mathscr{H}, [\cdot, \cdot]\big)$ (regarded as a pre-Hilbert $\mathbb{C}$-module)
and for any $x_1, x_2, \ldots, x_n, z \in \mathscr{H}$ with $\|z\|=1$, the inequality \eqref{T.2.1.I.2} in Corollary \ref{C.2.2} reads as
\begingroup\makeatletter\def\f@size{10}\check@mathfonts
\begin{align*}
\Big|\prod_{i=1}^{n}[x_i, z]\Big| \leq \frac{1}{2}\Big(
\prod_{i=1}^{n}\|x_i\| + \Big|[x_1, x_2]\prod_{i=3}^{n}[x_i, z]\Big|\Big),
\end{align*}
\endgroup
which is an extension of the classical Buzano inequality in pre-Hilbert space $\mathscr{H}$.
\end{remark}
Two other consequences of Theorem \ref{T.2.1} can be stated as follows.
\begin{corollary}\label{C.2.4}
Let $\mathscr{X}$ be a pre-Hilbert $\mathscr{A}$-module and let $\varphi$ be a positive linear functional on $\mathscr{A}$.
If $x_1, x_2, \ldots, x_n, z \in\mathscr{X}$ such that $\varphi(|z|^2) = 1$,
then for any $\zeta\geq0$ and $\alpha \in \mathbb{C}\setminus\{0\}$
\begingroup\makeatletter\def\f@size{10}\check@mathfonts
\begin{align*}
\Big|\prod_{i=1}^{n}\varphi(\langle x_i, z\rangle)\Big|^2
&\leq \frac{\max\{1, |\alpha -1|^2\}}{|\alpha|^2}\prod_{i=1}^{n}\varphi(|x_i|^2)
+ \frac{1}{|\alpha|^2(1+\zeta)}\Big|\varphi(\langle x_1, x_2\rangle)\prod_{i=3}^{n}\varphi(\langle x_i, z\rangle)\Big|^2\nonumber
\\& \qquad + \frac{\zeta +2(1+\zeta)\max\{1, |\alpha -1|\}}{|\alpha|^2(1+\zeta)}\prod_{i=1}^{n}\sqrt{\varphi(|x_i|^2)}
\Big|\varphi(\langle x_1, x_2\rangle)\prod_{i=3}^{n}\varphi(\langle x_i, z\rangle)\Big|.
\end{align*}
\endgroup
\end{corollary}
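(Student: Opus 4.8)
The plan is to derive Corollary \ref{C.2.4} directly from Theorem \ref{T.2.1} by choosing the set $\mathbb{D}$ and the function $f$ appropriately. The key observation is that the coefficients $f(1-\xi)$ and $f(\xi)$ appearing in the conclusion of Theorem \ref{T.2.1} must, for a given real parameter $\zeta \geq 0$, be made to equal $\frac{1}{1+\zeta}$ and $\frac{\zeta}{1+\zeta}$ respectively. First I would fix $\zeta \geq 0$ and set $\xi = \frac{\zeta}{1+\zeta} \in [0,1)$, so that $1 - \xi = \frac{1}{1+\zeta}$. It then suffices to produce a single point set (or a suitable $\mathbb{D}$ containing $\xi$ and $1-\xi$) together with an admissible $f$ taking the prescribed values.

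Concretely, I would take $\mathbb{D} = \left\{\frac{1}{1+\zeta},\, \frac{\zeta}{1+\zeta}\right\} \subseteq [0,1] \subseteq \mathbb{R}$ and define $f : \mathbb{D} \to [0,\infty)$ by $f\!\left(\frac{\zeta}{1+\zeta}\right) = \frac{\zeta}{1+\zeta}$ and $f\!\left(\frac{1}{1+\zeta}\right) = \frac{1}{1+\zeta}$, i.e.\ simply $f(t) = t$ on $\mathbb{D}$. One checks immediately that $f$ is nonnegative and that $f(t) + f(1-t) = t + (1-t) = 1$ for both elements $t \in \mathbb{D}$ (note $\mathbb{D}$ is symmetric about $1/2$, so $1-t \in \mathbb{D}$ whenever $t \in \mathbb{D}$, and the identity functional hypothesis of Theorem \ref{T.2.1} is satisfied). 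When $\zeta = 0$ the set $\mathbb{D}$ degenerates to $\{1\}$ with $f(1)=1$, and the argument still goes through since $f(0)+f(1)$ need only be checked for $t \in \mathbb{D}$; alternatively one may just use $\mathbb{D}=\{0,1\}$ with $f(0)=0$, $f(1)=1$.

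With these choices, apply Theorem \ref{T.2.1} with $\xi = \frac{\zeta}{1+\zeta}$: the term $\frac{f(1-\xi)}{|\alpha|^2} = \frac{1}{|\alpha|^2(1+\zeta)}$ multiplies $\left|\varphi(\langle x_1, x_2\rangle)\prod_{i=3}^{n}\varphi(\langle x_i, z\rangle)\right|^2$, and the coefficient $\frac{f(\xi) + 2\max\{1,|\alpha-1|\}}{|\alpha|^2} = \frac{\frac{\zeta}{1+\zeta} + 2\max\{1,|\alpha-1|\}}{|\alpha|^2} = \frac{\zeta + 2(1+\zeta)\max\{1,|\alpha-1|\}}{|\alpha|^2(1+\zeta)}$ multiplies the mixed product term; the first term is unchanged. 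This is exactly the asserted inequality, so the proof is complete after the substitution and a line of arithmetic to rewrite the coefficients over the common denominator $|\alpha|^2(1+\zeta)$.

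There is no real obstacle here — the content is entirely in Theorem \ref{T.2.1}, and Corollary \ref{C.2.4} is a specialization obtained by an explicit choice of the pair $(\mathbb{D}, f)$. The only minor point requiring a moment's care is the boundary case $\zeta = 0$ (and the verification that the chosen $f$ genuinely satisfies $f(t) + f(1-t) = 1$ on all of $\mathbb{D}$, which follows from the symmetry of $\mathbb{D}$ under $t \mapsto 1-t$). I would state the proof in just a few lines: ``Apply Theorem \ref{T.2.1} with $\mathbb{D} = \{\frac{1}{1+\zeta}, \frac{\zeta}{1+\zeta}\}$, $f(t) = t$, and $\xi = \frac{\zeta}{1+\zeta}$.''
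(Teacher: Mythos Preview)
Your proposal is correct and follows exactly the paper's approach: the paper's one-line proof reads ``Set $f(t)=t$ and $\xi=\frac{\zeta}{1+\zeta}$ in Theorem \ref{T.2.1},'' which is precisely your specialization (the paper implicitly takes $\mathbb{D}=[0,1]$ rather than your finite set, but this is immaterial). One small slip: the case in which your two-point set $\mathbb{D}$ degenerates to a singleton is $\zeta=1$ (giving $\mathbb{D}=\{1/2\}$), not $\zeta=0$.
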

\begin{proof}
Set $f(t)=t$ and $\xi=\frac{\zeta}{1+\zeta}$ in Theorem \ref{T.2.1}.
\end{proof}
\begin{corollary}\label{C.2.5}
Let $\mathscr{X}$ be a pre-Hilbert $\mathscr{A}$-module and let $\varphi$ be a positive linear functional on $\mathscr{A}$.
If $x_1, x_2, \ldots, x_n, z \in\mathscr{X}$ such that $\varphi(|z|^2) = 1$,
then for any $\eta\in[-\frac{1}{2}, \frac{3}{2}]$ and $\alpha \in \mathbb{C}\setminus\{0\}$
\begingroup\makeatletter\def\f@size{10}\check@mathfonts
\begin{align*}
\Big|\prod_{i=1}^{n}\varphi(\langle x_i, z\rangle)\Big|^2
&\leq \frac{\max\{1, |\alpha -1|^2\}}{|\alpha|^2}\prod_{i=1}^{n}\varphi(|x_i|^2)
+ \frac{3-2\eta}{4|\alpha|^2}\Big|\varphi(\langle x_1, x_2\rangle)\prod_{i=3}^{n}\varphi(\langle x_i, z\rangle)\Big|^2\nonumber
\\& \qquad + \frac{1+2\eta +8\max\{1, |\alpha -1|\}}{4|\alpha|^2}\prod_{i=1}^{n}\sqrt{\varphi(|x_i|^2)}
\Big|\varphi(\langle x_1, x_2\rangle)\prod_{i=3}^{n}\varphi(\langle x_i, z\rangle)\Big|.
\end{align*}
\endgroup
\end{corollary}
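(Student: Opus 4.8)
The plan is to obtain this inequality as yet another direct specialization of Theorem~\ref{T.2.1}, in exactly the manner used to prove Corollary~\ref{C.2.4}. The only real work is to select the domain $\mathbb{D}$, the function $f$, and the parameter $\xi$ so that the two $f$-dependent coefficients appearing in Theorem~\ref{T.2.1}, namely $\frac{f(1-\xi)}{|\alpha|^2}$ and $\frac{f(\xi)+2\max\{1,|\alpha-1|\}}{|\alpha|^2}$, reduce to the coefficients $\frac{3-2\eta}{4|\alpha|^2}$ and $\frac{1+2\eta+8\max\{1,|\alpha-1|\}}{4|\alpha|^2}$ occurring in the statement.

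First I would match the coefficient of the squared mixed term: Theorem~\ref{T.2.1} yields $\frac{f(1-\xi)}{|\alpha|^2}$, and the target is $\frac{3-2\eta}{4|\alpha|^2}$, which forces $f(1-\xi)=\frac{3-2\eta}{4}$. Next I would match the coefficient of the linear mixed term: rewriting $\frac{1+2\eta+8\max\{1,|\alpha-1|\}}{4|\alpha|^2}=\frac{1}{|\alpha|^2}\Big(\frac{1+2\eta}{4}+2\max\{1,|\alpha-1|\}\Big)$ and comparing with $\frac{f(\xi)+2\max\{1,|\alpha-1|\}}{|\alpha|^2}$ forces $f(\xi)=\frac{1+2\eta}{4}$. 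Compatibility with the hypothesis $f(\xi)+f(1-\xi)=1$ of Theorem~\ref{T.2.1} is then automatic, since $\frac{1+2\eta}{4}+\frac{3-2\eta}{4}=1$.

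These two requirements are met by the clean choice $\mathbb{D}=[0,1]$, $f(t)=t$, and $\xi=\frac{1+2\eta}{4}$: indeed $f(t)=t$ maps $[0,1]$ into $[0,\infty)$ and satisfies $f(t)+f(1-t)=1$, while for this $f$ one has $f(\xi)=\xi=\frac{1+2\eta}{4}$ and $f(1-\xi)=1-\xi=\frac{3-2\eta}{4}$. Substituting into the inequality of Theorem~\ref{T.2.1} then reproduces the asserted bound verbatim. The one point deserving attention — and essentially the only ``obstacle'' — is the admissibility condition $\xi\in\mathbb{D}$: since $\eta$ is assumed to lie in $[-\frac{1}{2},\frac{3}{2}]$, the value $\xi=\frac{1+2\eta}{4}$ lies in $[0,1]$, so the hypotheses of Theorem~\ref{T.2.1} are indeed satisfied. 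This also explains precisely why the range $\eta\in[-\frac{1}{2},\frac{3}{2}]$ appears in the statement; everything else is bookkeeping.
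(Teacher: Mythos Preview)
Your argument is correct and follows the same approach as the paper: both deduce the corollary directly from Theorem~\ref{T.2.1} by choosing $f$ and $\xi$ so that $f(\xi)=\frac{1+2\eta}{4}$ and $f(1-\xi)=\frac{3-2\eta}{4}$. The only cosmetic difference is the parametrization: the paper sets $f(t)=\frac{1+2t}{4}$ on $\mathbb{D}=[-\frac{1}{2},\frac{3}{2}]$ and $\xi=\eta$, whereas you take $f(t)=t$ on $\mathbb{D}=[0,1]$ and $\xi=\frac{1+2\eta}{4}$; both choices yield identical coefficients and the same admissibility constraint on $\eta$.
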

\begin{proof}
Set $f(t)=\frac{1+2t}{4}$ and $\xi=\eta$ in Theorem \ref{T.2.1}.
\end{proof}
In the following, we utilize the above extensions of the Buzano inequality in pre-Hilbert $C^*$-modules to
present new upper bounds for the numerical radius in $C^*$-algebras.
First, we apply Corollary \ref{C.2.2} to prove the following result.
\begin{theorem}\label{T.3.4}
Let $\mathscr{A}$ be a $C^*$-algebra with unit $e$ and let $a\in \mathscr{A}$. Then
\begin{align*}
v^n(a)\leq \sum_{i=1}^{n-1}\frac{1}{2^i}\|a^i\|{\|a\|}^{n-i}+\frac{1}{2^{n-1}}v(a^n),
\end{align*}
for all $n=2, 3, \ldots$.
\end{theorem}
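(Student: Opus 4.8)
The plan is to realise the unital $C^{*}$-algebra $\mathscr{A}$ as a pre-Hilbert $\mathscr{A}$-module over itself, feed carefully chosen elements into the Buzano-type inequality \eqref{T.2.1.I.2} of Corollary \ref{C.2.2}, iterate the resulting recursion, and finally pass to the supremum over states.

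Fix a state $\varphi\in\mathcal{S}(\mathscr{A})$ and regard $\mathscr{X}=\mathscr{A}$ as a pre-Hilbert $\mathscr{A}$-module with inner product $\langle x,y\rangle=x^{*}y$, so that $|x|^{2}=x^{*}x$; since $\varphi$ is a state we have $\varphi(|e|^{2})=\varphi(e)=1$, and $\varphi(|b|^{2})=\varphi(b^{*}b)\le\|b^{*}b\|=\|b\|^{2}$ for every $b\in\mathscr{A}$ by the $C^{*}$-identity. For $1\le k\le n$ put $T_{k}:=|\varphi(a^{k})|\,|\varphi(a)|^{\,n-k}$, so that $T_{1}=|\varphi(a)|^{n}$ and $T_{n}=|\varphi(a^{n})|$.

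The heart of the argument is the recursion
\[
T_{k}\le\frac12\,\|a^{k}\|\,\|a\|^{\,n-k}+\frac12\,T_{k+1}\qquad(1\le k\le n-1),
\]
which I would obtain by applying \eqref{T.2.1.I.2} with $m:=n-k+1$ entries, taking $z=e$, $x_{1}=(a^{*})^{k}$, $x_{2}=a$ and $x_{3}=\cdots=x_{m}=a^{*}$. Indeed, $\varphi(\langle x_{1},z\rangle)=\varphi(a^{k})$, $\varphi(\langle x_{2},z\rangle)=\overline{\varphi(a)}$ and $\varphi(\langle x_{j},z\rangle)=\varphi(a)$ for $j\ge 3$, so the left side of \eqref{T.2.1.I.2} equals $|\varphi(a^{k})|\,|\varphi(a)|^{\,m-1}=T_{k}$; also $\langle x_{1},x_{2}\rangle=a^{k+1}$ and $\prod_{j=3}^{m}\varphi(\langle x_{j},z\rangle)=\varphi(a)^{m-2}$, so the second term inside the parentheses on the right of \eqref{T.2.1.I.2} equals $|\varphi(a^{k+1})|\,|\varphi(a)|^{\,m-2}=T_{k+1}$; and $\sqrt{\varphi(|x_{1}|^{2})}=\sqrt{\varphi(a^{k}(a^{k})^{*})}\le\|a^{k}\|$ together with $\sqrt{\varphi(|x_{j}|^{2})}\le\|a\|$ for $j\ge 2$ bounds the remaining term by $\|a^{k}\|\,\|a\|^{\,m-1}=\|a^{k}\|\,\|a\|^{\,n-k}$. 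This is exactly the asserted recursion.

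Iterating it for $k=1,2,\ldots,n-1$ telescopes to
\[
|\varphi(a)|^{n}=T_{1}\le\sum_{i=1}^{n-1}\frac{1}{2^{i}}\,\|a^{i}\|\,\|a\|^{\,n-i}+\frac{1}{2^{n-1}}\,|\varphi(a^{n})|.
\]
Since the first summand does not depend on $\varphi$, taking the supremum over $\varphi\in\mathcal{S}(\mathscr{A})$ and using $v(a)=\sup_{\varphi}|\varphi(a)|$ and $v(a^{n})=\sup_{\varphi}|\varphi(a^{n})|$ gives the claim. I expect the only delicate point to be the bookkeeping in the recursion step: one must keep the exponent $n-k$ of $|\varphi(a)|$, the power $a^{k+1}$ produced inside $\langle x_{1},x_{2}\rangle$, the power $a^{k}$ inside $|x_{1}|^{2}$, and the count $m-1=n-k$ of norm factors all mutually consistent (with the usual convention that an empty product is $1$); once the substitution above is fixed, everything is routine.
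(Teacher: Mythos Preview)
Your proof is correct and follows essentially the same route as the paper: both arguments view $\mathscr{A}$ as a pre-Hilbert module over itself, apply the Buzano-type inequality \eqref{T.2.1.I.2} of Corollary~\ref{C.2.2} iteratively with $x_1$ carrying the growing power $a^k$ and the remaining slots filled by copies of $a$ (or $a^*$), and telescope the resulting recursion before taking the supremum over states. The only cosmetic differences are that you use the conjugate inner-product convention $\langle x,y\rangle=x^*y$ (the paper uses $\langle x,y\rangle=y^*x$), you bound $\sqrt{\varphi(|x_j|^2)}$ by norms at each step rather than at the end, and you package the iteration as an explicit recursion $T_k\le\frac12\|a^k\|\|a\|^{n-k}+\frac12 T_{k+1}$ instead of writing out the first few steps; none of this affects the substance.
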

\begin{proof}
Let $n\in\mathbb{N}$ and $n\geq2$. We will show that for any $\varphi \in \mathcal{S}(\mathscr{A})$,
\begin{align}\label{T.3.4.I.1}
\left|\varphi(a)\right|^n\leq \sum_{i=1}^{n-1}\frac{1}{2^i}\sqrt{\varphi(|a^i|^2)}\sqrt{\varphi^{n-i}(|a^*|^2)}+\frac{1}{2^{n-1}}\left|\varphi(a^n)\right|.
\end{align}
By Corollary \ref{C.2.2} with $\mathscr{X} = \mathscr{A}$ we have
\begingroup\makeatletter\def\f@size{10}\check@mathfonts
\begin{align*}
\left|\varphi(a)\right|^n&=\left|\varphi(\langle a, e\rangle)\varphi(\langle a^*, e\rangle)\varphi^{n-2}(\langle a^*, e\rangle)\right|
\\&\leq \frac{1}{2}\sqrt{\varphi(|a|^2)}\sqrt{\varphi^{n-1}(|a^*|^2)}
+ \frac{1}{2}\left|\varphi(\langle a, a^*\rangle)\varphi^{n-2}(\langle a^*, e\rangle)\right|
\\& = \frac{1}{2}\sqrt{\varphi(|a|^2)}\sqrt{\varphi^{n-1}(|a^*|^2)}
+ \frac{1}{2}\left|\varphi(\langle a^2, e\rangle)\varphi(\langle a^*, e\rangle)\varphi^{n-3}(\langle a^*, e\rangle)\right|
\\& \leq \frac{1}{2}\sqrt{\varphi(|a|^2)}\sqrt{\varphi^{n-1}(|a^*|^2)}
+ \frac{1}{4}\sqrt{\varphi(|a^2|^2)}\sqrt{\varphi^{n-2}(|a^*|^2)}
\\& \qquad + \frac{1}{4}\left|\varphi(\langle a^2, a^*\rangle)\varphi^{n-3}(\langle a^*, e\rangle)\right|
\\& = \frac{1}{2}\sqrt{\varphi(|a|^2)}\sqrt{\varphi^{n-1}(|a^*|^2)}
+ \frac{1}{4}\sqrt{\varphi(|a^2|^2)}\sqrt{\varphi^{n-2}(|a^*|^2)}
\\& \qquad+ \frac{1}{4}\left|\varphi(\langle a^3, e\rangle)\varphi(\langle a^*, e\rangle)\varphi^{n-4}(\langle a^*, e\rangle)\right|
\\& \leq \frac{1}{2}\sqrt{\varphi(|a|^2)}\sqrt{\varphi^{n-1}(|a^*|^2)}
+ \frac{1}{4}\sqrt{\varphi(|a^2|^2)}\sqrt{\varphi^{n-2}(|a^*|^2)}
\\& \qquad + \frac{1}{8}\sqrt{\varphi(|a^3|^2)}\sqrt{\varphi^{n-3}(|a^*|^2)}
+\frac{1}{8}\left|\varphi(\langle a^3, a^*\rangle)\varphi^{n-4}(\langle a^*, e\rangle)\right|.
\end{align*}
\endgroup
If we continue in this way, we obtain
\begingroup\makeatletter\def\f@size{10}\check@mathfonts
\begin{align*}
\left|\varphi(a)\right|^n&\leq \frac{1}{2}\sqrt{\varphi(|a|^2)}\sqrt{\varphi^{n-1}(|a^*|^2)}
+ \frac{1}{4}\sqrt{\varphi(|a^2|^2)}\sqrt{\varphi^{n-2}(|a^*|^2)}
\\& \qquad + \frac{1}{8}\sqrt{\varphi(|a^3|^2)}\sqrt{\varphi^{n-3}(|a^*|^2)} + \frac{1}{16}\sqrt{\varphi(|a^4|^2)}\sqrt{\varphi^{n-4}(|a^*|^2)}
\\& \qquad \quad +\cdots + \frac{1}{2^{n-1}}\sqrt{\varphi(|a^{n-1}|^2)}\sqrt{\varphi(|a^*|^2)} +
\frac{1}{2^{n-1}}\left|\varphi(\langle a^{n-1}, a^*\rangle)\right|,
\end{align*}
\endgroup
which gives \eqref{T.3.4.I.1}.
Now, by taking the supremum over all $\varphi \in \mathcal{S}(\mathscr{A})$ in \eqref{T.3.4.I.1}, the desired inequality follows.
\end{proof}
\begin{corollary}\label{T.3.3.C.2}
Let $\mathscr{A}$ be a unital $C^*$-algebra and let $a\in \mathscr{A}$. Then
\begin{align*}
v^n(a)\leq \frac{1}{2^{n-1}-1}\sum_{i=1}^{n-1}2^{n-i-1}{\|a\|}^{n-i}\|a^i\|.
\end{align*}
for all $n=2, 3, \ldots$.
\end{corollary}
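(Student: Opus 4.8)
The plan is to obtain this bound by feeding the power inequality back into Theorem \ref{T.3.4}. Fix an integer $n\ge 2$. Theorem \ref{T.3.4} already provides
\[
v^n(a)\le \sum_{i=1}^{n-1}\frac{1}{2^i}\|a^i\|\,\|a\|^{n-i}+\frac{1}{2^{n-1}}v(a^n),
\]
so the only extra ingredient needed is the power inequality $v(a^n)\le v^n(a)$ recalled in the Introduction, which lets us replace the unwanted term $\frac{1}{2^{n-1}}v(a^n)$ by $\frac{1}{2^{n-1}}v^n(a)$ and absorb it into the left-hand side.

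The key steps, in order, are: (i) apply Theorem \ref{T.3.4}; (ii) use $\frac{1}{2^{n-1}}v(a^n)\le\frac{1}{2^{n-1}}v^n(a)$ and rearrange to get $\bigl(1-\tfrac{1}{2^{n-1}}\bigr)v^n(a)\le\sum_{i=1}^{n-1}\tfrac{1}{2^i}\|a^i\|\,\|a\|^{n-i}$; (iii) note that for $n\ge 2$ the coefficient $\tfrac{2^{n-1}-1}{2^{n-1}}$ is strictly positive, so dividing through is legitimate and yields
\[
v^n(a)\le \frac{2^{n-1}}{2^{n-1}-1}\sum_{i=1}^{n-1}\frac{1}{2^i}\|a^i\|\,\|a\|^{n-i}
=\frac{1}{2^{n-1}-1}\sum_{i=1}^{n-1}2^{n-i-1}\|a\|^{n-i}\|a^i\|,
\]
using $\tfrac{2^{n-1}}{2^{n-1}-1}\cdot\tfrac{1}{2^i}=\tfrac{2^{n-i-1}}{2^{n-1}-1}$, which is exactly the asserted inequality.

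There is essentially no obstacle here: this is a one-line rearrangement of Theorem \ref{T.3.4}. The only point to be careful about is the positivity of $2^{n-1}-1$, which is precisely why the statement is restricted to $n\ge 2$ (at $n=1$ that coefficient vanishes and the manipulation breaks down). It is also worth observing that the resulting estimate is in general weaker than Theorem \ref{T.3.4}, since it discards the information carried by $v(a^n)$ when the latter is much smaller than $v^n(a)$; its merit is that it is expressed purely through the norms $\|a^i\|$.
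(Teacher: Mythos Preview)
Your proposal is correct and follows exactly the approach indicated in the paper: apply Theorem~\ref{T.3.4}, use the power inequality $v(a^n)\le v^n(a)$ to move the last term to the left, and divide by the positive factor $1-\tfrac{1}{2^{n-1}}=\tfrac{2^{n-1}-1}{2^{n-1}}$. The paper's own proof is the single sentence ``The proof follows Theorem~\ref{T.3.4} and the power inequality for the numerical radius,'' of which your write-up is a faithful expansion.
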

\begin{proof}
The proof follows Theorem \ref{T.3.4} and the power inequality for the numerical radius.
\end{proof}
\begin{remark}\label{T.3.4.R.1}
For $n=2, 3, \ldots$ we have
\begin{align*}
&\sum_{i=1}^{n-1}\frac{1}{2^i}{\|a\|}^{n-i}\|a^i\|+\frac{1}{2^{n-1}}v(a^n)
\\& \quad \leq\sum_{i=1}^{n-1}\frac{1}{2^i}{\|a\|}^{n}+\frac{1}{2^{n-1}}\|a^n\|
\\& \quad \leq \left(1-\frac{1}{2^{n-1}}\right){\|a\|}^n+\frac{1}{2^{n-1}}{\|a\|}^n= \|a\|^n.
\end{align*}
Therefore, the inequality in Theorem \ref{T.3.4} is an improvement of the second inequality in \eqref{numerical radius inequality}.
\end{remark}
Recall that the spectral radius of $a\in\mathscr{A}$ is the number
$r(a) = \sup\left\{|\lambda|: \, \lambda\in \sigma(a)\right\}$,
where $\sigma(a)$ is the spectrum of $a$.
It is well-known that the spectral radius $r(a)$ of $a$ satisfies
$r(a)= \displaystyle{\lim_{n\rightarrow +\infty}}{\|a^n\|}^{\frac{1}{n}}$.
\begin{corollary}\label{T.3.3.C.1}
Let $\mathscr{A}$ be a unital $C^*$-algebra and let $a\in \mathscr{A}$. If $v(a)=\|a\|$, then $v(a)=r(a)$.
\end{corollary}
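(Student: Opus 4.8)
The plan is to feed the hypothesis $v(a)=\|a\|$ back into Theorem \ref{T.3.4} and to exploit the elementary bounds $\|a^i\|\le\|a\|^i$ together with the power inequality $v(a^n)\le v^n(a)$ in order to force every power of $a$ to attain its norm, i.e. $\|a^n\|=\|a\|^n$ for all $n$. Once this is established, the spectral radius formula $r(a)=\lim_{n\to\infty}\|a^n\|^{\frac1n}$ immediately yields $r(a)=\|a\|=v(a)$, as desired.

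In detail, I would first dispose of the trivial case $a=0$ and assume $a\neq 0$. Fix an integer $n\geq 2$. Since $v(a)=\|a\|$ we have $v^n(a)=\|a\|^n$, so Theorem \ref{T.3.4} gives
\[
\|a\|^n \leq \sum_{i=1}^{n-1}\frac{1}{2^i}\|a^i\|\,\|a\|^{n-i}+\frac{1}{2^{n-1}}v(a^n).
\]
Applying the power inequality $v(a^n)\leq v^n(a)=\|a\|^n$, then the submultiplicativity bound $\|a^i\|\leq\|a\|^i$ on the right-hand side, and using $\sum_{i=1}^{n-1}2^{-i}=1-2^{-(n-1)}$, I obtain
\[
\|a\|^n \leq \Big(\sum_{i=1}^{n-1}\frac{1}{2^i}\Big)\|a\|^n+\frac{1}{2^{n-1}}\|a\|^n=\|a\|^n.
\]
Hence the entire chain consists of equalities; in particular $\sum_{i=1}^{n-1}2^{-i}\big(\|a\|^n-\|a^i\|\|a\|^{n-i}\big)=0$, and since every summand is nonnegative with a positive coefficient, each vanishes, giving $\|a^i\|=\|a\|^i$ for $i=1,\dots,n-1$. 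As $n$ was arbitrary, $\|a^n\|=\|a\|^n$ for all $n\in\mathbb{N}$, and therefore $r(a)=\lim_{n\to\infty}\|a^n\|^{\frac1n}=\|a\|=v(a)$.

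I do not anticipate a genuine obstacle: the argument is a short deduction, and the only points needing care are excluding $a=0$ and keeping track of which inequalities are being saturated. One could alternatively run the same computation starting from Corollary \ref{T.3.3.C.2}, since there too the coefficient sum $\frac{1}{2^{n-1}-1}\sum_{i=1}^{n-1}2^{n-i-1}$ equals $1$, again forcing $\|a^i\|=\|a\|^i$ and hence $r(a)=\|a\|=v(a)$.
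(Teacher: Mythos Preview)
Your argument is correct and is essentially the paper's own proof, spelled out in detail: the paper appeals to the chain of inequalities in Remark~\ref{T.3.4.R.1} (which is exactly the computation you carry out) to conclude from $v(a)=\|a\|$ that $\|a^n\|=\|a\|^n$ for all $n$, and then invokes the spectral radius formula. The only cosmetic difference is that the paper bounds $v(a^n)$ by $\|a^n\|$ rather than by $v^n(a)$ via the power inequality, but both routes yield the same saturated chain.
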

\begin{proof}
Let $v(a)=\|a\|$. By the inequalities in Remark \ref{T.3.4.R.1} it follows that
\begin{align*}
v^n(a)=v(a^n)=\|a^n\|=\|a\|^n \qquad (n=2, 3, \ldots).
\end{align*}
This implies
$v(a)= \displaystyle{\lim_{n\rightarrow +\infty}}{\|a^n\|}^{\frac{1}{n}}=r(a)$.
\end{proof}
In the following theorem we state another upper bound for the numerical radius in $C^*$-algebras.
\begin{theorem}\label{T.3.1}
Let $\mathscr{A}$ be a $C^*$-algebra with unit $e$ and let $a\in \mathscr{A}$.
For any $\alpha, \beta \in \mathbb{C}\setminus\{0\}$,
\begingroup\makeatletter\def\f@size{10}\check@mathfonts
\begin{align*}
v^3(a) \leq \frac{\max\{1, |\alpha -1|\}}{2|\alpha|}
\left\|\,|a^*|^2+|a|^2\right\|\,\|a\|
+ \frac{\max\{1, |\beta -1|\}}{|\alpha\beta|}
\|a^2\|\,\|a\| + \frac{1}{|\alpha\beta|}v(a^3).
\end{align*}
\endgroup
\end{theorem}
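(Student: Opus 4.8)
The plan is to establish, for each state $\varphi \in \mathcal{S}(\mathscr{A})$, the pointwise bound
\begin{align*}
|\varphi(a)|^3 &\leq \frac{\max\{1, |\alpha -1|\}}{2|\alpha|}\big\|\,|a^*|^2+|a|^2\big\|\,\|a\| \\
&\quad + \frac{\max\{1, |\beta -1|\}}{|\alpha\beta|}\,\|a^2\|\,\|a\| + \frac{1}{|\alpha\beta|}\,v(a^3),
\end{align*}
and then take the supremum over $\varphi$, using that $\sup_{\varphi \in \mathcal{S}(\mathscr{A})}|\varphi(a)|^3 = v^3(a)$. All the work is done inside the pre-Hilbert $\mathscr{A}$-module $\mathscr{X} = \mathscr{A}$ with $\langle x, y\rangle = y^*x$, so that $|x|^2 = x^*x$; I take $z = e$, which is admissible because $\varphi(|e|^2) = \varphi(e) = 1$ for a state $\varphi$.

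First I would invoke inequality \eqref{T.2.1.I.1} of Corollary \ref{C.2.2} with $n=3$ and the arrangement $x_1 = a$, $x_2 = a^*$, $x_3 = a$. Since $\varphi(\langle a, e\rangle) = \varphi(a)$, $\varphi(\langle a^*, e\rangle) = \overline{\varphi(a)}$ and $\langle a, a^*\rangle = a^2$, the left-hand side is precisely $|\varphi(a)|^3$, and the corollary yields
\[
|\varphi(a)|^3 \leq \frac{\max\{1,|\alpha-1|\}}{|\alpha|}\sqrt{\varphi(|a|^2)}\,\sqrt{\varphi(|a^*|^2)}\,\sqrt{\varphi(|a|^2)} + \frac{1}{|\alpha|}\big|\varphi(a^2)\varphi(a)\big|.
\]
The first summand I would control by the arithmetic--geometric mean inequality $\sqrt{\varphi(|a|^2)}\sqrt{\varphi(|a^*|^2)} \leq \tfrac{1}{2}\varphi(|a|^2+|a^*|^2) \leq \tfrac{1}{2}\big\|\,|a^*|^2+|a|^2\big\|$ (using $\varphi(b)\le\|b\|$ for $b\ge 0$, valid since $\varphi$ is a state), together with $\sqrt{\varphi(|a|^2)} \le \sqrt{\|a^*a\|} = \|a\|$; this gives exactly the first term of the claimed bound, namely $\frac{\max\{1,|\alpha-1|\}}{2|\alpha|}\big\|\,|a^*|^2+|a|^2\big\|\,\|a\|$.

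It then remains to estimate $\big|\varphi(a^2)\varphi(a)\big|$, and here I would apply the Buzano inequality \eqref{G.B} a second time, now with $x = a^2$, $y = a^*$, $z = e$ and the parameter $\beta$. Since $\langle a^2, e\rangle = a^2$, $\langle a^*, e\rangle = a^*$ and $\langle a^2, a^*\rangle = a^3$, we obtain
\begin{align*}
\big|\varphi(a^2)\varphi(a)\big| &\leq \frac{1}{|\beta|}\Big(\max\{1,|\beta-1|\}\sqrt{\varphi(|a^2|^2)}\,\sqrt{\varphi(|a^*|^2)} + \big|\varphi(a^3)\big|\Big) \\
&\leq \frac{1}{|\beta|}\Big(\max\{1,|\beta-1|\}\,\|a^2\|\,\|a\| + v(a^3)\Big),
\end{align*}
using $\sqrt{\varphi(|a^2|^2)}\le\|a^2\|$, $\sqrt{\varphi(|a^*|^2)}\le\|a\|$ and $|\varphi(a^3)|\le v(a^3)$. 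Dividing by $|\alpha|$ and substituting into the previous display produces the pointwise bound, and passing to the supremum over $\varphi \in \mathcal{S}(\mathscr{A})$ finishes the argument.

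The one point that genuinely needs care is the combinatorics of the two applications: the symbols $a$ and $a^*$ must be slotted into $x_1, x_2, x_3$ (and later into $x, y$) so that simultaneously the product $\varphi(\langle x_1,z\rangle)\varphi(\langle x_2,z\rangle)\varphi(\langle x_3,z\rangle)$ has modulus $|\varphi(a)|^3$ and the cross terms $\langle x_1, x_2\rangle$, $\langle a^2, a^*\rangle$ collapse to the honest powers $a^2$ and $a^3$ --- with a different arrangement one would be left with an expression such as $a^*a^2$ that is not controlled by $v(a^3)$. Once the arrangement $(x_1,x_2,x_3)=(a,a^*,a)$ followed by $(x,y)=(a^2,a^*)$ is chosen, everything else reduces to combining \eqref{T.2.1.I.1}, \eqref{G.B}, the AM--GM inequality, and the state estimate $\varphi(b)\le\|b\|$.
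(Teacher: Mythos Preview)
Your proof is correct and follows essentially the same approach as the paper: both apply Corollary~\ref{C.2.2} (with $n=3$) once, then the two-term Buzano inequality \eqref{G.B} with parameter $\beta$ to the residual product, and bound the first summand via AM--GM and the state estimate $\varphi(b)\le\|b\|$. The only cosmetic difference is that the paper uses the arrangement $(x_1,x_2,x_3)=(a^*,a,a)$ followed by $(x,y)=((a^*)^2,a)$, producing $(a^*)^2$ and $(a^*)^3$ in the intermediate steps, whereas you use $(a,a^*,a)$ followed by $(a^2,a^*)$ to obtain $a^2$ and $a^3$ directly; since $|\varphi((a^*)^k)|=|\varphi(a^k)|$ for states, the two are identical.
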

\begin{proof}
Let $\alpha, \beta\in \mathbb{C}\setminus\{0\}$. For any $\varphi \in \mathcal{S}(\mathscr{A})$,
by Corollary \ref{C.2.2} with $\mathscr{X} = \mathscr{A}$ and the arithmetic–-geometric mean inequality we have
\begingroup\makeatletter\def\f@size{10}\check@mathfonts
\begin{align*}
\left|\varphi(a)\right|^3&=\left|\varphi(\langle a^*, e\rangle)\varphi(\langle a, e\rangle)\varphi(\langle a, e\rangle)\right|
\\&\leq \frac{1}{|\alpha|}\left(\max\{1, |\alpha -1|\}
\sqrt{\varphi(|a^*|^2)}\sqrt{\varphi(|a|^2)}\sqrt{\varphi(|a|^2)}
+ \left|\varphi(\langle a^*, a\rangle)\varphi(\langle a, e\rangle)\right|\right)
\\& = \frac{\max\{1, |\alpha -1|\}}{|\alpha|}
\sqrt{\varphi(|a^*|^2)}\sqrt{\varphi(|a|^2)}\sqrt{\varphi(|a|^2)}
+ \frac{1}{|\alpha|}\left|\varphi(\langle (a^*)^2, e\rangle)\varphi(\langle a, e\rangle)\right|
\\& \leq \frac{\max\{1, |\alpha -1|\}}{2|\alpha|}
\left(\varphi(|a^*|^2)+\varphi(|a|^2)\right)\sqrt{\varphi(|a|^2)}
\\& \qquad + \frac{1}{|\alpha||\beta|}\left(\max\{1, |\beta -1|\}
\sqrt{\varphi(|(a^*)^2|^2)}\sqrt{\varphi(|a|^2)}
+ \left|\varphi(\langle (a^*)^2, a\rangle)\right|\right)
\\& = \frac{\max\{1, |\alpha -1|\}}{2|\alpha|}
\varphi\left(|a^*|^2+|a|^2\right)\sqrt{\varphi(|a|^2)}
\\& \qquad + \frac{\max\{1, |\beta -1|\}}{|\alpha\beta|}
\sqrt{\varphi(|(a^*)^2|^2)}\sqrt{\varphi(|a|^2)}
+ \frac{1}{|\alpha\beta|}\left|\varphi(a^3)\right|
\\& \leq \frac{\max\{1, |\alpha -1|\}}{2|\alpha|}
\left\|\,|a^*|^2+|a|^2\right\|\,\|a\|
+ \frac{\max\{1, |\beta -1|\}}{|\alpha\beta|}
\|a^2\|\,\|a\| + \frac{1}{|\alpha\beta|}v(a^3),
\end{align*}
\endgroup
and so
\begingroup\makeatletter\def\f@size{10}\check@mathfonts
\begin{align*}
\left|\varphi(a)\right|^3\leq \frac{\max\{1, |\alpha -1|\}}{2|\alpha|}
\left\|\,|a^*|^2+|a|^2\right\|\,\|a\|
+ \frac{\max\{1, |\beta -1|\}}{|\alpha\beta|}
\|a^2\|\,\|a\| + \frac{1}{|\alpha\beta|}v(a^3).
\end{align*}
\endgroup
Now, by taking the supremum over all $\varphi \in \mathcal{S}(\mathscr{A})$ in the last inequality, the desired inequality follows.
\end{proof}
Here are some immediate consequences of Theorem \ref{T.3.1}.
\begin{corollary}\label{T.3.1.C.1}
Let $\mathscr{A}$ be a unital $C^*$-algebra and let $a\in \mathscr{A}$. Then
\begin{align*}
v^3(a) \leq \frac{1}{4}\left\|\,|a^*|^2 + |a|^2\right\|\,\|a\| + \frac{1}{4}\left\|a^2\right\|\,\|a\| + \frac{1}{4}v(a^3).
\end{align*}
\end{corollary}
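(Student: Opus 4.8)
The plan is to specialize the free parameters $\alpha,\beta \in \mathbb{C}\setminus\{0\}$ in Theorem \ref{T.3.1} to the value that makes all three coefficients as small as possible. Observe that the functions $\alpha\mapsto \frac{\max\{1,|\alpha-1|\}}{|\alpha|}$ and $(\alpha,\beta)\mapsto\frac{\max\{1,|\beta-1|\}}{|\alpha\beta|}$ and $(\alpha,\beta)\mapsto\frac{1}{|\alpha\beta|}$ all improve as $|\alpha|$ and $|\beta|$ grow, but the first one is bounded below once $|\alpha-1|$ starts to dominate; the natural balancing point, exactly as in the passage from \eqref{G.B} to \eqref{C.B}, is $\alpha = 2$, where $\frac{\max\{1,|\alpha-1|\}}{|\alpha|} = \frac{1}{2}$, and likewise $\beta = 2$, where $\frac{\max\{1,|\beta-1|\}}{|\beta|} = \frac{1}{2}$.

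Concretely, I would simply set $\alpha = 2$ and $\beta = 2$ in the conclusion of Theorem \ref{T.3.1}. Then $\max\{1,|\alpha-1|\} = \max\{1,1\} = 1$, so the first coefficient becomes $\frac{1}{2\cdot 2} = \frac{1}{4}$; similarly $\max\{1,|\beta-1|\} = 1$ and $|\alpha\beta| = 4$, so the second coefficient is $\frac{1}{4}$ and the third coefficient $\frac{1}{|\alpha\beta|}$ is also $\frac{1}{4}$. Substituting these into the displayed inequality of Theorem \ref{T.3.1} yields exactly
\begin{align*}
v^3(a) \leq \frac{1}{4}\left\|\,|a^*|^2 + |a|^2\right\|\,\|a\| + \frac{1}{4}\left\|a^2\right\|\,\|a\| + \frac{1}{4}v(a^3),
\end{align*}
which is the assertion.

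There is really no obstacle here: the corollary is a one-line specialization, and the only thing to check is that $\alpha = \beta = 2$ is an admissible (nonzero) choice and that it is in fact the value minimizing each coefficient simultaneously, which it is. One could remark in passing that any $\alpha,\beta$ with $1 \le |\alpha-1|$ and $|\alpha| \ge 2$ (and similarly for $\beta$) gives the same first coefficient, but $|\alpha| = |\beta| = 2$ is forced if one also wants the second and third coefficients minimized, so $\alpha = \beta = 2$ is the canonical choice. Hence the proof is: \emph{Put $\alpha = \beta = 2$ in Theorem \ref{T.3.1}.}
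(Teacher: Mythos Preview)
Your proof is correct and matches the paper's exactly: the paper's entire argument is ``The proof follows Theorem \ref{T.3.1} by letting $\alpha=\beta=2$.'' Your surrounding optimality discussion is unnecessary (and the parenthetical claim that any $\alpha$ with $1\le|\alpha-1|$ and $|\alpha|\ge 2$ yields the same first coefficient is not quite right---e.g.\ $\alpha=3$ gives $\tfrac{|\alpha-1|}{2|\alpha|}=\tfrac{1}{3}\neq\tfrac{1}{4}$), but this does not affect the actual proof.
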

\begin{proof}
The proof follows Theorem \ref{T.3.1} by letting $\alpha=\beta=2$.
\end{proof}
\begin{corollary}\label{T.3.1.C.3}
Let $\mathscr{A}$ be a unital $C^*$-algebra and let $a\in \mathscr{A}$. Then
\begin{align*}
v^3(a) \leq \frac{1}{2}\left\|\,|a^*|^2 + |a|^2\right\|\,\|a\|.
\end{align*}
\end{corollary}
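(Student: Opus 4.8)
The plan is to obtain this inequality as a limiting case of Theorem \ref{T.3.1}. The bound there depends on two free parameters $\alpha, \beta \in \mathbb{C}\setminus\{0\}$, and the idea is to drive $|\alpha|$ and $|\beta|$ to infinity in such a way that the two summands carrying $\|a^2\|\,\|a\|$ and $v(a^3)$ vanish in the limit, while the coefficient of $\left\|\,|a^*|^2+|a|^2\right\|\,\|a\|$ tends to $\tfrac12$.

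Concretely, I would fix a real number $t > 2$ and apply Theorem \ref{T.3.1} with $\alpha = \beta = t$. Since $\max\{1, |t-1|\} = t-1$ for $t > 2$, this gives
\begin{align*}
v^3(a) \leq \frac{t-1}{2t}\left\|\,|a^*|^2+|a|^2\right\|\,\|a\| + \frac{t-1}{t^2}\,\|a^2\|\,\|a\| + \frac{1}{t^2}\,v(a^3).
\end{align*}
This holds for every $t > 2$, and as $t \to +\infty$ one has $\frac{t-1}{2t} \to \frac12$, $\frac{t-1}{t^2} \to 0$ and $\frac{1}{t^2} \to 0$, so the right-hand side converges to $\frac12\left\|\,|a^*|^2+|a|^2\right\|\,\|a\|$. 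Passing to the limit then yields
\begin{align*}
v^3(a) \leq \frac{1}{2}\left\|\,|a^*|^2 + |a|^2\right\|\,\|a\|,
\end{align*}
as claimed.

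I do not anticipate any genuine obstacle here: the only points to verify are the elementary asymptotics of the three coefficients as $t\to+\infty$, together with the trivial fact that an inequality valid for every $t$ is preserved under taking the limit. One could equally well keep $\beta$ fixed and let only $|\alpha| \to \infty$, since then $\frac{1}{|\alpha\beta|}$ and $\frac{\max\{1,|\beta-1|\}}{|\alpha\beta|}$ still tend to $0$ while $\frac{\max\{1,|\alpha-1|\}}{2|\alpha|} = \frac{|\alpha-1|}{2|\alpha|} \to \frac12$; the choice $\alpha=\beta=t$ merely makes the bookkeeping cleaner.
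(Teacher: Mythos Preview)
Your argument is correct and matches the paper's own proof, which likewise applies Theorem~\ref{T.3.1} with $\alpha=\beta=n$ and lets $n\to\infty$. The extra details you supply about the asymptotics of the coefficients are sound and just make explicit what the paper leaves implicit.
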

\begin{proof}
The proof follows Theorem \ref{T.3.1} by letting $\alpha=\beta=n$ and $n\rightarrow\infty$.
\end{proof}
\begin{corollary}\label{T.3.1.C.4}
Let $\mathscr{A}$ be a unital $C^*$-algebra and let $a\in \mathscr{A}$. Then
\begin{align*}
v^3(a) \leq \frac{1}{4}\left\|\,|a^*|^2 + |a|^2\right\|\,\|a\| +\frac{1}{2}\left\|a^2\right\|\|a\|.
\end{align*}
\end{corollary}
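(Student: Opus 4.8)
The plan is to derive Corollary~\ref{T.3.1.C.4} as a direct specialization of Theorem~\ref{T.3.1}, exactly in the spirit of Corollaries~\ref{T.3.1.C.1} and~\ref{T.3.1.C.3}: one simply picks appropriate nonzero complex parameters $\alpha,\beta$ that make the coefficients of the two "geometric" terms equal to $\tfrac14$ and $\tfrac12$ respectively, while driving the $v(a^3)$ term to zero. First I would recall the bound from Theorem~\ref{T.3.1},
\begingroup\makeatletter\def\f@size{10}\check@mathfonts
\begin{align*}
v^3(a) \leq \frac{\max\{1, |\alpha -1|\}}{2|\alpha|}
\left\|\,|a^*|^2+|a|^2\right\|\,\|a\|
+ \frac{\max\{1, |\beta -1|\}}{|\alpha\beta|}
\|a^2\|\,\|a\| + \frac{1}{|\alpha\beta|}v(a^3),
\end{align*}
\endgroup
valid for all $\alpha,\beta\in\mathbb{C}\setminus\{0\}$. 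To eliminate the last term one sends $|\alpha\beta|\to\infty$; to keep the first coefficient bounded (and in fact equal to $\tfrac14$) one needs $\alpha$ real and $\ge 1$, since then $\max\{1,|\alpha-1|\}/(2|\alpha|)=(\alpha-1)/(2\alpha)$ for $\alpha\ge2$, which tends to $\tfrac12$, or more simply one keeps $\alpha=2$ fixed so that this coefficient is exactly $\tfrac{1}{4}$.

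With $\alpha=2$ held fixed, the first term is already $\tfrac14\|\,|a^*|^2+|a|^2\|\,\|a\|$ and the remaining two terms become $\dfrac{\max\{1,|\beta-1|\}}{2|\beta|}\|a^2\|\,\|a\| + \dfrac{1}{2|\beta|}v(a^3)$. Now I would let $\beta=n\to\infty$ through positive integers: the last term $\tfrac{1}{2n}v(a^3)\to 0$, while $\dfrac{\max\{1,|n-1|\}}{2n}=\dfrac{n-1}{2n}\to\dfrac12$. Passing to the limit (the left side $v^3(a)$ does not depend on $\beta$, so the inequality persists in the limit) yields precisely
\begin{align*}
v^3(a) \leq \frac{1}{4}\left\|\,|a^*|^2 + |a|^2\right\|\,\|a\| +\frac{1}{2}\left\|a^2\right\|\|a\|,
\end{align*}
which is the claim.

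There is essentially no obstacle here — this is a routine limiting argument identical in structure to the proof of Corollary~\ref{T.3.1.C.3} — so the only point requiring a word of care is the interchange of the inequality with the limit in $\beta$, which is immediate because $v^3(a)$ is a constant independent of $\beta$ and the right-hand side is a convergent sequence of upper bounds. One could equivalently phrase the whole thing as "set $\alpha=2$, $\beta=n$, and let $n\to\infty$ in Theorem~\ref{T.3.1}," and I would present the proof in exactly that one-line form.
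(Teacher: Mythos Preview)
Your proposal is correct and is exactly the paper's own proof: the paper writes ``The proof follows Theorem~\ref{T.3.1} by letting $\alpha=2$, $\beta=n$ and $n\rightarrow\infty$,'' which is precisely the one-line argument you arrive at.
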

\begin{proof}
The proof follows Theorem \ref{T.3.1} by letting $\alpha=2$, $\beta=n$ and $n\rightarrow\infty$.
\end{proof}
As an immediate consequence of Corollary \ref{T.3.1.C.1} and the power inequality for the numerical radius, we have the following result.
\begin{corollary}\label{T.3.1.C.2}
Let $\mathscr{A}$ be a unital $C^*$-algebra and let $a\in \mathscr{A}$. Then
\begin{align*}
v^3(a) \leq \frac{1}{3}\left\||a^*|^2 + |a|^2\right\|\|a\| + \frac{1}{3}\left\|a^2\right\|\|a\|.
\end{align*}
\end{corollary}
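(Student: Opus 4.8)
The plan is to derive the stated bound directly from Corollary~\ref{T.3.1.C.1} by combining it with the power inequality $v(a^3)\leq v^3(a)$. First I would take the inequality of Corollary~\ref{T.3.1.C.1},
\begin{align*}
v^3(a) \leq \frac{1}{4}\left\|\,|a^*|^2 + |a|^2\right\|\,\|a\| + \frac{1}{4}\left\|a^2\right\|\,\|a\| + \frac{1}{4}v(a^3),
\end{align*}
and substitute the power inequality $v(a^3)\leq v^3(a)$ into the last term, obtaining
\begin{align*}
v^3(a) \leq \frac{1}{4}\left\|\,|a^*|^2 + |a|^2\right\|\,\|a\| + \frac{1}{4}\left\|a^2\right\|\,\|a\| + \frac{1}{4}v^3(a).
\end{align*}

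The key step is then purely algebraic: subtract $\frac{1}{4}v^3(a)$ from both sides to get $\frac{3}{4}v^3(a)$ on the left, and multiply through by $\frac{4}{3}$. This yields exactly
\begin{align*}
v^3(a) \leq \frac{1}{3}\left\|\,|a^*|^2 + |a|^2\right\|\,\|a\| + \frac{1}{3}\left\|a^2\right\|\,\|a\|,
\end{align*}
which is the claimed inequality.

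I do not expect any genuine obstacle here, since everything reduces to a one-line rearrangement once the power inequality is invoked; the only point requiring a word of care is that the manipulation is legitimate because $v(a)$ is finite (indeed $v(a)\leq\|a\|<\infty$), so moving the $\frac{1}{4}v^3(a)$ term across the inequality is valid. This is precisely the ``immediate consequence'' structure already signalled in the text preceding the corollary.
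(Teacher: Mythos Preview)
Your proposal is correct and follows exactly the approach indicated in the paper: the text preceding the corollary states that it is an immediate consequence of Corollary~\ref{T.3.1.C.1} together with the power inequality, and your one-line rearrangement is precisely that argument.
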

\begin{remark}\label{T.3.1.R.1}
The inequalities in Corollaries \ref{T.3.1.C.1}-\ref{T.3.1.C.2} are improvements of the second inequality in \eqref{numerical radius inequality}.
Indeed, for instance in Corollary \ref{T.3.1.C.1}, we have
\begin{align*}
\frac{1}{4}&\left\|\,|a^*|^2 + |a|^2\right\|\,\|a\| + \frac{1}{4}\left\|a^2\right\|\,\|a\| + \frac{1}{4}v(a^3)
\\& \leq \frac{1}{4}\left(\left\|\,|a^*|^2\right\| + \left\|\,|a|^2\right\|\right)\|a\| + \frac{1}{4}\|a\|^2\|a\| + \frac{1}{4}\left\|a^3\right\|
\\& \leq \frac{2}{4}\|a\|^3 + \frac{1}{4}\|a\|^3 + \frac{1}{4}\|a\|^3 = \|a\|^3.
\end{align*}
\end{remark}
\begin{theorem}\label{T.3.2}
Let $\mathscr{A}$ be a $C^*$-algebra with unit $e$ and let $a\in \mathscr{A}$.
For any $\alpha \in \mathbb{C}\setminus\{0\}$,
\begingroup\makeatletter\def\f@size{10}\check@mathfonts
\begin{align*}
v^3(a) \leq \frac{1}{|\alpha|}\left(\max\{1, |\alpha -1|\}\|a\|^3+\min\{v(a^*a^2), v(a^2a^*)\}\right).
\end{align*}
\endgroup
\end{theorem}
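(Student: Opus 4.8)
The plan is to mimic the proof of Theorem~\ref{T.3.1}, but this time keeping the product in the ``non-split'' order so that the cross term $\varphi(\langle x_1,x_2\rangle)$ that appears in Corollary~\ref{C.2.2} collapses directly to a single power of $a$. Concretely, fix $\varphi\in\mathcal{S}(\mathscr{A})$ and $\alpha\in\mathbb{C}\setminus\{0\}$, work in the pre-Hilbert $\mathscr{A}$-module $\mathscr{X}=\mathscr{A}$ with $z=e$, and apply inequality \eqref{T.2.1.I.1} of Corollary~\ref{C.2.2} with $n=3$. The first route is to take $x_1=x_2=a^*$ and $x_3=a$, so that $\langle x_1,x_2\rangle=\langle a^*,a^*\rangle=a^*a^{**}$; but it is cleaner to take $x_1=a$, $x_2=a$, $x_3=a$ and recall that $\langle x_1,x_2\rangle=\langle a,a\rangle=a^*a$, which is not quite what we want either. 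The right choice that produces $a^*a^2$ is $x_1=a^*$, $x_2=a$, $x_3=a$: then $\langle x_1,x_2\rangle=\langle a^*,a\rangle=(a^*)^*a=a^2$ — no. Let me instead reason from the target: we want a term $v(a^*a^2)$, i.e.\ $|\varphi(a^*a^2)|$, so we need $\varphi(\langle x_1,x_2\rangle)\varphi(\langle x_3,e\rangle)$ to equal $\varphi(a^*a^2)$ up to conjugation, which forces $\langle x_1,x_2\rangle\,x_3$-type bookkeeping; the honest way is to note $a^*a^2=\langle a, a^2\rangle$ if we use the convention $\langle u,v\rangle=u^*v$...

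Rather than fuss over the inner-product convention, the structural plan is this. Using \eqref{T.2.1.I.1} with $z=e$ and a judicious ordering of $(x_1,x_2,x_3)$ among copies of $a$ and $a^*$, one obtains
\begin{align*}
|\varphi(a)|^3 \leq \frac{1}{|\alpha|}\Big(\max\{1,|\alpha-1|\}\,\sqrt{\varphi(|a|^2)}\,\sqrt{\varphi(|a^*|^2)}\,\sqrt{\varphi(|a|^2)} + |\varphi(a^*a^2)|\Big),
\end{align*}
after identifying the cross term $\varphi(\langle x_1,x_2\rangle)\varphi(\langle x_3,e\rangle)$ with $\varphi(a^*a^2)$. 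Then I bound $\sqrt{\varphi(|a|^2)}\le\sqrt{\||a|^2\|}=\|a\|$ and likewise $\sqrt{\varphi(|a^*|^2)}\le\|a^*\|=\|a\|$, giving the product of square roots $\le\|a\|^3$, and I bound $|\varphi(a^*a^2)|\le v(a^*a^2)$ by definition of the numerical radius. Taking the supremum over $\varphi\in\mathcal{S}(\mathscr{A})$ yields
\begin{align*}
v^3(a)\leq\frac{1}{|\alpha|}\Big(\max\{1,|\alpha-1|\}\,\|a\|^3+v(a^*a^2)\Big).
\end{align*}
The symmetric choice of ordering (putting the two $a$'s first and $a^*$ last, or equivalently applying the argument to $a^*$ and using $v(b)=v(b^*)$) produces the same bound with $v(a^2a^*)$ in place of $v(a^*a^2)$. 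Since the left-hand side is fixed and $\alpha$ is arbitrary, we are free to take whichever of the two cross terms is smaller, which gives the $\min\{v(a^*a^2),v(a^2a^*)\}$ in the statement.

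The only genuinely delicate point is the bookkeeping in the first step: one must verify that with the chosen assignment of $a$'s and $a^*$'s to $x_1,x_2,x_3$ and with $z=e$, the quantity $\varphi(\langle x_1,x_2\rangle)\prod_{i=3}^{3}\varphi(\langle x_i,z\rangle)$ really equals $\varphi(a^*a^2)$ (resp.\ $\varphi(a^2a^*)$) up to complex conjugation — recall $|\varphi(\langle x_i,e\rangle)|=|\varphi(x_i)|$ and $\varphi(\langle a,a\rangle)=\varphi(a^*a)$ under the convention $|x|^2=\langle x,x\rangle=x^*x$ used for $\mathscr{X}=\mathscr{A}$, so one should pick $x_1,x_2$ to be $a^*$ and $a$ in the order making $\langle x_1,x_2\rangle$ land on $a^*\cdot a^2$ after multiplying by $\varphi(\langle x_3,e\rangle)=\varphi(a)$; since $\varphi(a^*a)\varphi(a)$ is not literally $\varphi(a^*a^2)$, the correct move is the ``unfolding'' already used in the proof of Theorem~\ref{T.3.4}, namely $\varphi(\langle a^*,a\rangle)=\varphi(\langle (a^*)^2,e\rangle)$-type identities — here $\langle a^*, a\rangle\rightsquigarrow a^2$ and then one more application of Cauchy--Schwarz/Corollary is not needed because we stop at the cross term. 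I expect this indexing verification, rather than any inequality, to be where care is required; everything after it is the routine chain $\sqrt{\varphi(|\cdot|^2)}\le\|\cdot\|$, $|\varphi(\cdot)|\le v(\cdot)$, and $\sup_\varphi$.
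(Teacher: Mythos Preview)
There is a genuine gap. With $n=3$ in Corollary~\ref{C.2.2} and $x_1,x_2,x_3\in\{a,a^*\}$, the cross term is $\big|\varphi(\langle x_1,x_2\rangle)\,\varphi(\langle x_3,e\rangle)\big|$, a \emph{product} of two scalars such as $\varphi(a^*a)\,\varphi(a)$. Since $\varphi$ is not multiplicative, this cannot be identified with a single evaluation $|\varphi(a^*a^2)|$. You correctly flag this (``$\varphi(a^*a)\varphi(a)$ is not literally $\varphi(a^*a^2)$''), but the proposed remedy via the ``unfolding'' identities of Theorem~\ref{T.3.4} does not help: those identities, such as $\langle a,a^*\rangle=\langle a^2,e\rangle$, only rewrite one inner product as another; they never merge two separate $\varphi$-values into one. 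So the displayed inequality you claim, with cross term $|\varphi(a^*a^2)|$, is not obtainable from \eqref{T.2.1.I.1} with three first-order vectors.

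The paper's proof supplies the missing idea: first apply Cauchy--Schwarz \eqref{C.S} to collapse two of the three factors \emph{inside} a single $\varphi$, namely
\[
|\varphi(a)|^3=\big|\varphi(\langle a,e\rangle)\big|^2\,|\varphi(a^*)|\le \varphi(a^*a)\,|\varphi(a^*)|=\big|\varphi(\langle a^*a,e\rangle)\,\varphi(\langle a,e\rangle)\big|,
\]
and only then invoke the two-vector Buzano inequality \eqref{G.B} with $x=a^*a$, $y=a$, $z=e$. Because one of the vectors is now the quadratic element $a^*a$, the cross term is $\big|\varphi(\langle a^*a,a\rangle)\big|=|\varphi(a^*a^2)|$ directly, and $\sqrt{\varphi(|a^*a|^2)}\sqrt{\varphi(|a|^2)}\le\|a\|^3$ gives the main term. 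The variant with $v(a^2a^*)$ comes from the symmetric computation starting with $aa^*$. In short, the step you need is not a different ordering of $a$'s and $a^*$'s in the $n=3$ inequality, but a preliminary Cauchy--Schwarz reduction so that the Buzano step is applied with the second-order element $a^*a$ (or $aa^*$) as one of the two vectors.
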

\begin{proof}
Let $\alpha\in \mathbb{C}\setminus\{0\}$. For any $\varphi \in \mathcal{S}(\mathscr{A})$,
by \eqref{G.B} and the Cauchy--Schwarz inequality \eqref{C.S} we have
\begingroup\makeatletter\def\f@size{10}\check@mathfonts
\begin{align*}
\left|\varphi(a)\right|^3&=\left|\varphi(\langle a, e\rangle)\right|^2\left|\varphi(a^*)\right|
\\& \leq \varphi(a^*a)\left|\varphi(a^*)\right|
\\& =\left|\varphi(\langle a^*a, e\rangle)\varphi(\langle a, e\rangle)\right|
\\&\leq \frac{1}{|\alpha|}\left(\max\{1, |\alpha -1|\}
\sqrt{\varphi(|a^*a|^2)}\sqrt{\varphi(|a|^2)}
+ \left|\varphi(\langle a^*a, a\rangle)\right|\right)
\\& = \frac{1}{|\alpha|}\left(\max\{1, |\alpha -1|\}
\sqrt{\varphi(|a^*a|^2)}\sqrt{\varphi(|a|^2)}
+ \left|\varphi(a^*a^2)\right|\right),
\end{align*}
\endgroup
and so
\begingroup\makeatletter\def\f@size{10}\check@mathfonts
\begin{align*}
\left|\varphi(a)\right|^3\leq \frac{1}{|\alpha|}\left(\max\{1, |\alpha -1|\}\|a\|^3 + v\left(a^*a^2\right)\right).
\end{align*}
\endgroup
By taking the supremum over all $\varphi \in \mathcal{S}(\mathscr{A})$ in the last inequality, we obtain
\begingroup\makeatletter\def\f@size{10}\check@mathfonts
\begin{align}\label{T.3.2.I.1}
v^3(a)\leq \frac{1}{|\alpha|}\left(\max\{1, |\alpha -1|\}\|a\|^3 + v\left(a^*a^2\right)\right).
\end{align}
\endgroup
By a similar argument, we also have
\begingroup\makeatletter\def\f@size{10}\check@mathfonts
\begin{align}\label{T.3.2.I.2}
v^3(a)\leq \frac{1}{|\alpha|}\left(\max\{1, |\alpha -1|\}\|a\|^3 + v\left(a^2a^*\right)\right).
\end{align}
\endgroup
Utilizing \eqref{T.3.2.I.1} and \eqref{T.3.2.I.2}, we deduce the desired result.
\end{proof}
\begin{remark}\label{T.3.2.R.2}
Let $\mathscr{A}$ be a unital $C^*$-algebra and let $a\in \mathscr{A}$.
By letting $\alpha=2$ in Theorem \ref{T.3.2} we have
\begingroup\makeatletter\def\f@size{10}\check@mathfonts
\begin{align}\label{T.3.1.I.2}
v^3(a) \leq \frac{1}{2}\|a\|^3 + \frac{1}{2}\min\{v\left(a^*a^2\right), v\left(a^2a^*\right)\}.
\end{align}
\endgroup
Therefore if $v(a)=\|a\|$, then since $\min\{v\left(a^*a^2\right), v\left(a^2a^*\right)\}\leq {\|a\|}^3$,
by \eqref{T.3.1.I.2} it follows that
\begin{align*}
v\left(a^*a^2\right) = v\left(a^2a^*\right)=\|a\|^3.
\end{align*}
\end{remark}
In the following theorem we present a family of upper bounds for the numerical radius in $C^*$-algebras.
\begin{theorem}\label{T.3.3}
Let $\mathscr{A}$ be a $C^*$-algebra with unit $e$ and let $a\in \mathscr{A}$.
Let $j=1, 2$ and let $\mathbb{D}_j$ be a subset of $\mathbb{R}$. Let $f_j: \mathbb{D}_j\rightarrow [0, \infty)$
be a mapping such that $f_j(t)+f_j(1-t)=1$ for all $t\in\mathbb{D}_j$.
For any $\xi_j\in\mathbb{D}_j$ and $\alpha, \beta, \gamma \in \mathbb{C}\setminus\{0\}$
\begingroup\makeatletter\def\f@size{10}\check@mathfonts
\begin{align*}
v^6(a)&\leq \frac{\max\{1, |\alpha -1|^2\}}{2|\alpha|^2}\left\|\,|a^*|^4 + |a|^4\right\|\,{\|a\|}^2
+ \frac{f_1(1-\xi_1)\max\{1, |\beta -1|^2\}}{|\alpha\beta|^2}{\|a^2\|}^2{\|a\|}^2
\\& \qquad + \frac{f_1(1-\xi_1)f_2(1-\xi_2)}{|\alpha\beta|^2}v^2(a^3) + \frac{f_1(1-\xi_1)\left(f_2(\xi_2)+2\max\{1, |\beta -1|\}\right)}{|\alpha\beta|^2}\|a^2\|\,\|a\|\,v(a^3)
\\& \qquad \quad + \frac{\left(f_1(\xi_1)+2\max\{1, |\alpha -1|\}\right)\max\{1, |\gamma -1|\}}{2|\alpha|^2|\gamma|}\left\|\,|a^*|^2 + |a|^2\right\|\,\|a^2\|{\|a\|}^2
\\& \qquad \qquad + \frac{f_1(\xi_1)+2\max\{1, |\alpha -1|\}}{2|\alpha|^2|\gamma|}\left\|\,|a^*|^2 + |a|^2\right\|\,\|a\| \,v(a^3).
\end{align*}
\endgroup
\end{theorem}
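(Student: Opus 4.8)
The plan is to imitate the strategy used for Theorem \ref{T.3.1}, but iterate Corollary \ref{C.2.2} (and Theorem \ref{T.2.1}) twice more, starting from $v^6(a)=\sup_\varphi|\varphi(a)|^6$ and writing $|\varphi(a)|^6=|\varphi(\langle a,e\rangle)\varphi(\langle a^*,e\rangle)\cdots|$ with six factors arranged so that the ``leftover'' inner products $\langle x_1,x_2\rangle$ collapse to higher powers of $a$. Concretely, for a fixed state $\varphi$ I would first apply the two-factor estimate \eqref{C.B}/\eqref{G.B} with parameter $\alpha$ to the pair $(a,a)$ among $|\varphi(a)|^6 = |\varphi(\langle a^*,e\rangle)|^2\cdot|\varphi(\langle a^*,e\rangle)|^2\cdot|\varphi(\langle a,e\rangle)|^2$ — really to $\varphi(\langle a^*,e\rangle)^2\varphi(\langle a^*,e\rangle)^2\varphi(\langle a,e\rangle)^2$ grouped appropriately — producing a main term $\tfrac{\max\{1,|\alpha-1|^2\}}{|\alpha|^2}\varphi(|a^*|^2)^2\varphi(|a|^2)^2\cdot(\cdots)$ that, via the arithmetic--geometric mean inequality $\varphi(|a^*|^2)^2+\varphi(|a|^2)^2$-style step and $\|\cdot\|$-domination, becomes $\tfrac{\max\{1,|\alpha-1|^2\}}{2|\alpha|^2}\||a^*|^4+|a|^4\|\|a\|^2$, plus a cross term carrying $|\varphi(\langle a^*,a\rangle)\cdots|=|\varphi((a^*)^2)\cdots|$.

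The key bookkeeping step is that the cross term, after the first application, contains a factor of the form $|\varphi(\langle (a^*)^2, e\rangle)\varphi(\langle a,e\rangle)\cdots|^2$ or its first power, and I would split it using the $f$-decomposition exactly as in Theorem \ref{T.2.1}: the squared piece gets weight $f_1(1-\xi_1)$ and is fed into a \emph{second} application of Corollary \ref{C.2.2} with parameter $\beta$ on the pair $((a^*)^2, a)$, yielding $\langle (a^*)^2, a\rangle = \varphi(a^3)$ and a main term $\tfrac{\max\{1,|\beta-1|^2\}}{|\beta|^2}\|a^2\|^2\|a\|^2$ plus a further cross term in $|\varphi(a^3)|^2$; this cross term is then split again with weight $f_2(1-\xi_2)$ to give the $\tfrac{f_1(1-\xi_1)f_2(1-\xi_2)}{|\alpha\beta|^2}v^2(a^3)$ summand and a mixed $\|a^2\|\|a\|v(a^3)$ summand with coefficient $f_1(1-\xi_1)(f_2(\xi_2)+2\max\{1,|\beta-1|\})$. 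Meanwhile the $f_1(\xi_1)$-piece of the first cross term is the \emph{linear} (non-squared) one; after bounding its constituent inner products by Cauchy--Schwarz \eqref{C.S} one is left with a single factor $|\varphi(\langle a^*,a\rangle)\varphi(\langle a,e\rangle)|=|\varphi((a^*)^2)\varphi(a)|$-type quantity times $\|a^*\|\|a\|^{\cdots}$, to which a \emph{third} application of \eqref{G.B} with parameter $\gamma$ (on the pair $((a^*)^2,a)$ again) gives the last two summands, with the $\||a^*|^2+|a|^2\|$ factor appearing because at that stage I apply the arithmetic--geometric mean inequality to $\varphi(|a^*|^2)\varphi(|a|^2)\le\tfrac12\varphi(|a^*|^2+|a|^2)\sqrt{\cdots}$ before the $\gamma$-step, and the split of \emph{its} cross term produces the $\|a^2\|$ term and the $v(a^3)$ term with the shared coefficient $f_1(\xi_1)+2\max\{1,|\alpha-1|\}$.

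After assembling all six contributions for a fixed $\varphi$, I would take the supremum over $\varphi\in\mathcal{S}(\mathscr{A})$, using $v(a^3)=\sup_\varphi|\varphi(a^3)|$ and the obvious $|\varphi(b)|\le\|b\|$ bounds for the state-independent factors $\varphi(|a|^2)^{1/2}\le\|a\|$, $\varphi(|a^2|^2)^{1/2}\le\|a^2\|$, $\varphi(|a^*|^4+|a|^4)\le\||a^*|^4+|a|^4\|$, etc. The only genuinely delicate point — and where I expect to spend the most care — is getting the \emph{exact} grouping of the six $\varphi(\langle\cdot,\cdot\rangle)$ factors right at each of the three stages so that the leftover inner products are precisely $\langle a^*,a\rangle$, $\langle(a^*)^2,a\rangle$, rather than something like $\langle a, a^*\rangle$ or a mismatched power; a wrong grouping would still give \emph{a} bound but not this one, and matching the stated coefficients (especially the placement of the $\tfrac12$'s coming from arithmetic--geometric mean and the $|\alpha\beta|^2$ versus $|\alpha|^2|\gamma|$ denominators) forces a unique choice of which factor plays the role of $x_1$, $x_2$, and $z$ at each step. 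Once that combinatorial skeleton is fixed, every remaining inequality is a direct invocation of \eqref{G.B}, \eqref{C.S}, the arithmetic--geometric mean inequality, and the $f_j(t)+f_j(1-t)=1$ splitting, so the proof is routine though lengthy.
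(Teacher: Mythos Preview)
Your approach is essentially identical to the paper's: write $|\varphi(a)|^6=\left|\varphi(\langle a^*,e\rangle)\varphi(\langle a,e\rangle)\varphi(\langle a,e\rangle)\right|^2$ (one $a^*$ and two $a$'s, not the two $a^*$'s you wrote), apply Theorem~\ref{T.2.1} with $(\alpha,f_1,\xi_1)$ and $n=3$, bound the main term via the arithmetic--geometric mean and Cauchy--Schwarz, then apply Theorem~\ref{T.2.1} again with $(\beta,f_2,\xi_2)$ to the squared cross term $\left|\varphi(\langle (a^*)^2,e\rangle)\varphi(\langle a,e\rangle)\right|^2$ and \eqref{G.B} with $\gamma$ to the linear cross term, and finally take the supremum over states. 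Apart from the slip in the stated initial grouping---which you yourself flag as the delicate combinatorial point---your outline matches the paper's proof step for step.
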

\begin{proof}
Let $\xi_j\in\mathbb{D}_j$ and $\alpha, \beta, \gamma \in \mathbb{C}\setminus\{0\}$. For any $\varphi \in \mathcal{S}(\mathscr{A})$,
by Theorem \ref{T.2.1} with $\mathscr{X} = \mathscr{A}$, the arithmetic–-geometric mean inequality,
the Cauchy--Schwarz inequality \eqref{C.S} and \eqref{G.B} we have
\begingroup\makeatletter\def\f@size{10}\check@mathfonts
\begin{align*}
\left|\varphi(a)\right|^6&=\left|\varphi(\langle a^*, e\rangle)\varphi(\langle a, e\rangle)\varphi(\langle a, e\rangle)\right|^2
\\&\leq \frac{\max\{1, |\alpha -1|^2\}}{|\alpha|^2}\varphi(|a^*|^2)\varphi(|a|^2)\varphi(|a|^2)
+ \frac{f_1(1-\xi_1)}{|\alpha|^2}\left|\varphi(\langle a^*, a\rangle)\varphi(\langle a, e\rangle)\right|^2
\\& \qquad + \frac{f_1(\xi_1) + 2\max\{1, |\alpha -1|\}}{|\alpha|^2}\sqrt{\varphi(|a^*|^2)}\sqrt{\varphi(|a|^2)}\sqrt{\varphi(|a|^2)}
\left|\varphi(\langle a^*, a\rangle)\varphi(\langle a, e\rangle)\right|
\\& \leq \frac{\max\{1, |\alpha -1|^2\}}{2|\alpha|^2}\big(\varphi^2(|a^*|^2)+\varphi^2(|a|^2)\big)\varphi(|a|^2)
+ \frac{f_1(1-\xi_1)}{|\alpha|^2}\left|\varphi(\langle (a^*)^2, e\rangle)\varphi(\langle a, e\rangle)\right|^2
\\& \qquad + \frac{f_1(\xi_1) + 2\max\{1, |\alpha -1|\}}{2|\alpha|^2}\big(\varphi(|a^*|^2)+\varphi(|a|^2)\big)\sqrt{\varphi(|a|^2)}
\left|\varphi(\langle (a^*)^2, e\rangle)\varphi(\langle a, e\rangle)\right|
\\& \leq \frac{\max\{1, |\alpha -1|^2\}}{2|\alpha|^2}\big(\varphi(|a^*|^4)+\varphi(|a|^4)\big)\varphi(|a|^2)
\\& \qquad + \frac{f_1(1-\xi_1)}{|\alpha|^2}\Big(\frac{\max\{1, |\beta -1|^2\}}{|\beta|^2}\varphi(|(a^*)^2|^2)\varphi(|a|^2)
+ \frac{f_2(1-\xi_2)}{|\beta|^2}\left|\varphi(\langle (a^*)^2, a\rangle)\right|^2
\\& \qquad \quad + \frac{f_2(\xi_2) + 2\max\{1, |\beta -1|\}}{|\beta|^2}\sqrt{\varphi(|(a^*)^2|^2)}\sqrt{\varphi(|a|^2)}
\left|\varphi(\langle (a^*)^2, a\rangle)\right|\Big)
\\& \qquad + \frac{f_1(\xi_1) + 2\max\{1, |\alpha -1|\}}{2|\alpha|^2}\varphi\big(|a^*|^2+|a|^2\big)\sqrt{\varphi(|a|^2)}
\frac{1}{|\gamma|}\Big(\Big|\varphi(\langle (a^*)^2, a\rangle)\Big|
\\& \qquad \quad + \max\{1, |\gamma -1|\}\sqrt{\varphi(|(a^*)^2|^2)}\sqrt{\varphi(|a|^2)}\Big)
\\& = \frac{\max\{1, |\alpha -1|^2\}}{2|\alpha|^2}\varphi\big(|a^*|^4 +|a|^4\big)\varphi(|a|^2)
\\& \qquad + \frac{f_1(1-\xi_1)\max\{1, |\beta -1|^2\}}{|\alpha\beta|^2}\varphi(|(a^*)^2|^2)\varphi(|a|^2)
+ \frac{f_1(1-\xi_1)f_2(1-\xi_2)}{|\alpha\beta|^2}\left|\varphi(a^3)\right|^2
\\& \qquad \quad + \frac{f_1(1-\xi_1)\left(f_2(\xi_2)+2\max\{1, |\beta -1|\}\right)}{|\alpha\beta|^2}\sqrt{\varphi(|(a^*)^2|^2)}\sqrt{\varphi(|a|^2)}
\left|\varphi(a^3)\right|
\\& \qquad + \frac{\left(f_1(\xi_1)+2\max\{1, |\alpha -1|\}\right)\max\{1, |\gamma -1|\}}{2|\alpha|^2|\gamma|}\varphi\big(|a^*|^2+|a|^2\big)\sqrt{\varphi(|(a^*)^2|^2)}\varphi(|a|^2)
\\& \qquad \quad + \frac{f_1(\xi_1)+2\max\{1, |\alpha -1|\}}{2|\alpha|^2|\gamma|}\varphi\big(|a^*|^2+|a|^2\big)\sqrt{\varphi(|a|^2)}\left|\varphi(a^3)\right|.
\end{align*}
\endgroup
Therefore,
\begingroup\makeatletter\def\f@size{10}\check@mathfonts
\begin{align*}
\left|\varphi(a)\right|^6&\leq \frac{\max\{1, |\alpha -1|^2\}}{2|\alpha|^2}\left\|\,|a^*|^4 + |a|^4\right\|\,{\|a\|}^2
+ \frac{f_1(1-\xi_1)\max\{1, |\beta -1|^2\}}{|\alpha\beta|^2}{\|a^2\|}^2{\|a\|}^2
\\& \qquad + \frac{f_1(1-\xi_1)f_2(1-\xi_2)}{|\alpha\beta|^2}v^2(a^3) + \frac{f_1(1-\xi_1)\left(f_2(\xi_2)+2\max\{1, |\beta -1|\}\right)}{|\alpha\beta|^2}\|a^2\|\,\|a\|\,v(a^3)
\\& \qquad \quad + \frac{\left(f_1(\xi_1)+2\max\{1, |\alpha -1|\}\right)\max\{1, |\gamma -1|\}}{2|\alpha|^2|\gamma|}\left\|\,|a^*|^2 + |a|^2\right\|\,\|a^2\|{\|a\|}^2
\\& \qquad \qquad + \frac{f_1(\xi_1)+2\max\{1, |\alpha -1|\}}{2|\alpha|^2|\gamma|}\left\|\,|a^*|^2 + |a|^2\right\|\,\|a\| \,v(a^3).
\end{align*}
\endgroup
Now, by taking the supremum over all $\varphi \in \mathcal{S}(\mathscr{A})$ in the last inequality, the desired inequality follows.
\end{proof}
\begin{corollary}\label{T.3.3.C.1}
Let $\mathscr{A}$ be a unital $C^*$-algebra and let $a\in \mathscr{A}$. Then
\begingroup\makeatletter\def\f@size{10}\check@mathfonts
\begin{align*}
v^6(a)&\leq \frac{1}{8}\left\|\,|a^*|^4 + |a|^4\right\|\,{\|a\|}^2
+ \frac{1}{8}\left\|\,|a^*|^2 + |a|^2\right\|\,\|a^2\|{\|a\|}^2 + \frac{1}{16}{\|a^2\|}^2{\|a\|}^2
\\& \qquad + \frac{1}{8}\left\|\,|a^*|^2 + |a|^2\right\|\,\|a\| \,v(a^3) + \frac{1}{8}\|a^2\|\,\|a\|\,v(a^3) + \frac{1}{16}v^2(a^3).
\end{align*}
\endgroup
\end{corollary}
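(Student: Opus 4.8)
The plan is to specialize the master inequality of Theorem~\ref{T.3.3} to the simplest admissible choice of data and then read off the constants. Concretely, I would take $\mathbb{D}_1=\mathbb{D}_2=[0,1]$ with $f_1(t)=f_2(t)=t$ (which satisfies $f_j(t)+f_j(1-t)=1$), pick $\xi_1=\xi_2=\tfrac12$ so that $f_j(\xi_j)=f_j(1-\xi_j)=\tfrac12$, and set $\alpha=\beta=\gamma=2$ so that $\max\{1,|\alpha-1|\}=\max\{1,|\beta-1|\}=\max\{1,|\gamma-1|\}=1$ and $\max\{1,|\alpha-1|^2\}=\max\{1,|\beta-1|^2\}=1$, while $|\alpha|^2=|\beta|^2=|\gamma|=4$ and $|\alpha\beta|^2=16$, $|\alpha|^2|\gamma|=8$.

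With these substitutions each of the six coefficients in Theorem~\ref{T.3.3} collapses to an explicit rational number. The first term's coefficient is $\tfrac{1}{2\cdot 4}=\tfrac18$, multiplying $\big\||a^*|^4+|a|^4\big\|\,\|a\|^2$. The second becomes $\tfrac{(1/2)\cdot 1}{16}=\tfrac{1}{32}$ times $\|a^2\|^2\|a\|^2$; the third becomes $\tfrac{(1/2)(1/2)}{16}=\tfrac{1}{64}$ times $v^2(a^3)$; the fourth becomes $\tfrac{(1/2)(1/2+2)}{16}=\tfrac{5}{64}$ times $\|a^2\|\,\|a\|\,v(a^3)$; the fifth becomes $\tfrac{(1/2+2)\cdot 1}{8}=\tfrac{5}{16}$ times $\big\||a^*|^2+|a|^2\big\|\,\|a^2\|\,\|a\|^2$; and the sixth becomes $\tfrac{1/2+2}{8}=\tfrac{5}{16}$ times $\big\||a^*|^2+|a|^2\big\|\,\|a\|\,v(a^3)$. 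These do not match the stated corollary (which has $\tfrac18,\tfrac18,\tfrac{1}{16},\tfrac18,\tfrac18,\tfrac{1}{16}$), so the intended choice must instead be $f_1(\xi_1)=f_1(1-\xi_1)=0$ type behavior, i.e. $f_j(t)=\tfrac{1+2t}{4}$ as in Corollary~\ref{C.2.5} — no, rather the cleanest route giving all the mixed coefficients equal to $\tfrac18$ is $f_j(\xi_j)=0$, $f_j(1-\xi_j)=1$: taking $f_1=f_2=$ the indicator-type function on $\mathbb{D}_j=\{0,1\}$ with $f_j(0)=0$, $f_j(1)=1$, and $\xi_1=\xi_2=0$. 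Then $f_j(1-\xi_j)=f_j(1)=1$ and $f_j(\xi_j)=f_j(0)=0$, so the coefficients become $\tfrac18,\ \tfrac{1}{16},\ \tfrac{1}{16},\ \tfrac{2}{16}=\tfrac18,\ \tfrac{2}{8}=\tfrac14,\ \tfrac{2}{8}=\tfrac14$. That still overshoots the fifth and sixth, so the correct reconciliation is that the displayed Corollary~\ref{T.3.3.C.1} uses $\mathbb{D}_j=[0,1]$, $f_j(t)=t$, $\xi_1=\xi_2=0$, giving $f_j(1-\xi_j)=1$, $f_j(\xi_j)=0$ and the same numbers as just computed; I would then simply note $\tfrac14=\tfrac18+\tfrac18$ is \emph{not} what appears, so in writing the proof I will state precisely ``apply Theorem~\ref{T.3.3} with $f_1(t)=f_2(t)=t$, $\xi_1=\xi_2=0$ and $\alpha=\beta=\gamma=2$'' and carry out the arithmetic, trusting Theorem~\ref{T.3.3}'s coefficients as the binding constraint.

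Thus the proof I would write is short: \emph{Apply Theorem~\ref{T.3.3} with $\mathbb{D}_1=\mathbb{D}_2=[0,1]$, $f_1(t)=f_2(t)=t$, $\xi_1=\xi_2=0$, and $\alpha=\beta=\gamma=2$. Since $f_j(0)=0$ and $f_j(1)=1$, and $\max\{1,|2-1|\}=1$, $|2|^2=4$, $|2\cdot 2|^2=16$, $|2|^2|2|=8$, the six coefficients of Theorem~\ref{T.3.3} reduce respectively to $\tfrac18$, $\tfrac{1}{16}$, $\tfrac{1}{16}$, $\tfrac18$, $\tfrac18$, $\tfrac{1}{16}$ — wait, I must recompute the fifth and sixth with $f_1(\xi_1)=0$: the fifth coefficient is $\tfrac{(0+2)\cdot 1}{2\cdot 4\cdot 2}=\tfrac{2}{16}=\tfrac18$ and the sixth is $\tfrac{0+2}{2\cdot 4\cdot 2}=\tfrac18$; and the fourth is $\tfrac{1\cdot(0+2)}{16}=\tfrac{2}{16}=\tfrac18$. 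Good — these now agree exactly with the stated corollary. Substituting these values and the matching norm/numerical-radius factors from Theorem~\ref{T.3.3} yields the asserted inequality.}

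The main obstacle here is purely bookkeeping: one must track which of the six terms in Theorem~\ref{T.3.3} carries which combination of $f_j(\xi_j)$, $f_j(1-\xi_j)$, and the $\max\{1,|\cdot-1|\}$ factors, and verify that the choice $\xi_1=\xi_2=0$ (sending $f_j(\xi_j)\mapsto 0$, $f_j(1-\xi_j)\mapsto 1$) together with $\alpha=\beta=\gamma=2$ produces exactly the coefficients $\tfrac18,\tfrac{1}{16},\tfrac{1}{16},\tfrac18,\tfrac18,\tfrac{1}{16}$ displayed. There is no analytic content beyond Theorem~\ref{T.3.3}; once the substitution is pinned down, the corollary is immediate, so I would keep the proof to a single sentence naming the parameter choice and asserting that the stated inequality follows.

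\begin{proof}
Apply Theorem~\ref{T.3.3} with $\mathbb{D}_1 = \mathbb{D}_2 = [0,1]$, $f_1(t) = f_2(t) = t$ (so that $f_j(t) + f_j(1-t) = 1$), $\xi_1 = \xi_2 = 0$, and $\alpha = \beta = \gamma = 2$. Then $f_j(\xi_j) = f_j(0) = 0$ and $f_j(1 - \xi_j) = f_j(1) = 1$ for $j = 1, 2$, while $\max\{1, |\alpha - 1|\} = \max\{1, |\beta - 1|\} = \max\{1, |\gamma - 1|\} = 1$, $\max\{1, |\alpha - 1|^2\} = \max\{1, |\beta - 1|^2\} = 1$, $|\alpha|^2 = 4$, $|\alpha\beta|^2 = 16$, and $|\alpha|^2|\gamma| = 8$. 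Substituting these values, the six coefficients in Theorem~\ref{T.3.3} become, respectively,
\begingroup\makeatletter\def\f@size{10}\check@mathfonts
\begin{align*}
\frac{\max\{1, |\alpha -1|^2\}}{2|\alpha|^2} &= \frac{1}{8}, \qquad
\frac{f_1(1-\xi_1)\max\{1, |\beta -1|^2\}}{|\alpha\beta|^2} = \frac{1}{16}, \qquad
\frac{f_1(1-\xi_1)f_2(1-\xi_2)}{|\alpha\beta|^2} = \frac{1}{16},
\\
\frac{f_1(1-\xi_1)\left(f_2(\xi_2)+2\max\{1, |\beta -1|\}\right)}{|\alpha\beta|^2} &= \frac{1\cdot(0 + 2)}{16} = \frac{1}{8},
\\
\frac{\left(f_1(\xi_1)+2\max\{1, |\alpha -1|\}\right)\max\{1, |\gamma -1|\}}{2|\alpha|^2|\gamma|} &= \frac{(0 + 2)\cdot 1}{2\cdot 4\cdot 2} = \frac{1}{8},
\\
\frac{f_1(\xi_1)+2\max\{1, |\alpha -1|\}}{2|\alpha|^2|\gamma|} &= \frac{0 + 2}{2\cdot 4\cdot 2} = \frac{1}{8}.
\end{align*}
\endgroup
Therefore Theorem~\ref{T.3.3} yields
\begingroup\makeatletter\def\f@size{10}\check@mathfonts
\begin{align*}
v^6(a)&\leq \frac{1}{8}\left\|\,|a^*|^4 + |a|^4\right\|\,{\|a\|}^2
+ \frac{1}{16}{\|a^2\|}^2{\|a\|}^2
+ \frac{1}{16}v^2(a^3) + \frac{1}{8}\|a^2\|\,\|a\|\,v(a^3)
\\& \qquad + \frac{1}{8}\left\|\,|a^*|^2 + |a|^2\right\|\,\|a^2\|{\|a\|}^2
+ \frac{1}{8}\left\|\,|a^*|^2 + |a|^2\right\|\,\|a\| \,v(a^3),
\end{align*}
\endgroup
which, after rearranging the terms, is the desired inequality.
\end{proof}
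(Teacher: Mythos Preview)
Your formal proof is correct and uses exactly the same specialization as the paper: Theorem~\ref{T.3.3} with $\alpha=\beta=\gamma=2$, $\mathbb{D}_1=\mathbb{D}_2=[0,1]$, $f_1(t)=f_2(t)=t$, and $\xi_1=\xi_2=0$. The exploratory paragraph preceding the proof wanders through several wrong parameter choices before landing on the right one; you should delete that discussion and keep only the final \texttt{proof} environment, which is clean and matches the paper's argument verbatim.
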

\begin{proof}
The proof follows Theorem \ref{T.3.3} by letting $\alpha=\beta=\gamma=2$, $\mathbb{D}_1=\mathbb{D}_2=[0, 1]$, $f_1(t)=f_2(t)=t$ and $\xi_1=\xi_2=0$.
\end{proof}
\begin{remark}\label{T.3.3.R.1}
We have
\begin{align*}
&\frac{1}{8}\left\|\,|a^*|^4 + |a|^4\right\|\,{\|a\|}^2
+ \frac{1}{8}\left\|\,|a^*|^2 + |a|^2\right\|\,\|a^2\|{\|a\|}^2 + \frac{1}{16}{\|a^2\|}^2{\|a\|}^2
\\& \qquad + \frac{1}{8}\left\|\,|a^*|^2 + |a|^2\right\|\,\|a\| \,v(a^3) + \frac{1}{8}\|a^2\|\,\|a\|\,v(a^3) + \frac{1}{16}v^2(a^3)
\\& \leq \frac{1}{4}{\|a\|}^6 + \frac{1}{4}{\|a\|}^6 + \frac{1}{16}{\|a\|}^6 + \frac{1}{4}{\|a\|}^6+\frac{1}{8}{\|a\|}^6+\frac{1}{16}{\|a\|}^6
= \|a\|^6.
\end{align*}
So, the inequality in Corollary \ref{T.3.3.C.1} is an improvement of the second inequality in \eqref{numerical radius inequality}.
\end{remark}
\begin{remark}\label{T.3.3.R.2}
Other improvements of the second inequality in \eqref{numerical radius inequality} can also be obtained by choosing other suitable
$\alpha,\beta,\gamma$ and $f_1, f_2$ in Theorem \ref{T.3.3}.
\end{remark}
\textbf{Conflict of interest.} The author declares that he has no conflict of interest.

\textbf{Data availability.} Data sharing not applicable to the present paper as no
data sets were generated or analyzed during the current study.
\bibliographystyle{amsplain}

\end{document}